\def\ve{\varepsilon}
\def\mod{\,\text{\rm mod}\;}
\def\beq{\begin{equation}}
\def\eeq{\end{equation}}
\def\cite#1{{\rm [#1]}}
\newtheorem{theorem}{Theorem}
\newtheorem{lemma}{Lemma}
\newtheorem{cor}{Corollary}
\theoremstyle{definition}
\newtheorem*{rema}{Remark}
\begin{document}

\numberwithin{equation}{section}
\title{An approximation to the twin prime conjecture and the parity phenomenon}

\author{J\'anos Pintz\thanks{Supported by OTKA Grants K72731, K67676 and ERC-AdG.228005.}}

\date{}
\maketitle
\section{Introduction}
\label{sec:1}

The celebrated theorem of Chen \cite{Che1}, \cite{Che2} proved nearly 50 years ago asserts that there are infinitely many primes $p$ for which $p + 2$ is either a prime or has exactly two prime factors.
In view of this strong approximation of the twin prime conjecture seems to be a surprise that it is not known whether there are infinitely many primes $p$ such that $p + 2$ has an odd number of prime factors.
The reason for it is, as described by Hildebrand (\cite{Hil}) ``the so-called \emph{parity barrier}, a heuristic principle according to which sieve methods cannot differentiate between integers with an even and odd number of prime factors.''
Iwaniec \cite{Iwa} writes similarly about the \emph{parity phenomenon}:
``The parity phenomenon is best explained in the context of Bombieri's asymptotic sieve \cite{Bom}.
This says that within the classical conditions for the sieve one cannot sift but all numbers having the same parity of the number of prime divisors.
Never mind producing primes; we cannot even produce numbers having either one, three, five or seven prime divisors.
However, under the best circumstances we can obtain numbers having either 2006 or 2007 prime divisors.
Similarly we can obtain numbers having either one or two prime divisors, but we are not able to determine which of these numbers are there, probably both.''

In the present work we prove a weaker version of the problem that \hbox{$\lambda(p + 2) = -1$} for infinitely many primes~$p$, where $\lambda(n)$ is Liouville's function:
\beq
\lambda(n) = (-1)^{\Omega(n)}, \quad \Omega(n) = \sum_{p \mid n} 1,
\label{eq:1.1}
\eeq
and as always, $p, p_i, p'$ denote primes, $\mathcal P = \{p_n \}^\infty_{n = 1}$ the set of all primes.

\begin{theorem}
\label{th:1}
There exists an even number $d$ such that $0 < |d| \leq 16$ and $\lambda(p + d) = -1$ for infinitely many primes~$p$.
\end{theorem}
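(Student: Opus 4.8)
The plan is to reduce the statement to the following: produce a single fixed even $d$ with $0<|d|\le 16$ for which $\Omega(p+d)$ is odd for infinitely many primes $p$. The essential point --- and what lets one hope to evade the parity barrier described above --- is that $p+d$ need \emph{not} be prime; it suffices that it have an odd number of prime factors. As a warm-up that fixes the mechanism, recall that the Goldston--Pintz--Yildirim and Maynard--Tao sieves produce, for a suitable fixed admissible tuple $\mathcal H=\{h_1<\dots<h_k\}$, infinitely many $n$ for which at least two of the shifted values $n+h_1,\dots,n+h_k$ are prime. If $n+h_i<n+h_j$ are two such primes then $d=h_j-h_i$ is even (both are odd once $n$ is large) and $n+h_j=(n+h_i)+d$ is prime, so $\lambda\bigl((n+h_i)+d\bigr)=-1$. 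Since $\mathcal H$ is finite, only finitely many differences $h_j-h_i$ can occur, so by pigeonhole one even value $d_0$ is realized by infinitely many $n$; taking $p=n+h_i$ gives infinitely many primes with $\lambda(p+d_0)=-1$.

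This argument alone only yields $|d_0|$ bounded by the diameter of the smallest admissible tuple that forces two primes, which is far larger than $16$. To reach the bound $16$ the tuple must be short, and in a short tuple one cannot force two genuine primes. The plan is therefore to ask for less: detect a prime $p=n+h_i$ together with a \emph{nearby} value $n+h_j$, with $|h_j-h_i|\le 16$ (a shift in either direction is permitted, which is why the theorem allows $d<0$), that is an almost-prime --- a product of a bounded number of primes, in the spirit of Chen's $P_2$ and its $P_3$ refinements. Such a sieve controls the \emph{size} of $\Omega(n+h_j)$ but, by the parity principle, says nothing on its own about the parity of $\Omega(n+h_j)$.

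The main obstacle is exactly this: forcing $\Omega(n+h_j)$ to be odd. I expect to handle it by a dichotomy together with a second pigeonhole. Running a $P_2$-type sieve on the shift $n+h_j$, either $n+h_j$ is prime (hence $\Omega=1$ is odd) for infinitely many admissible $n$, in which case we conclude as in the warm-up; or, for all but finitely many admissible $n$, the value $n+h_j$ is a product of exactly two primes and so has $\lambda=+1$. In the second branch one no longer detects the parity of a single almost-prime directly --- this is precisely where the parity barrier bites --- and the gain must come from playing the bounded collection of shifts against one another, using the complete multiplicativity of $\lambda$, so that the indeterminate parity of any one value is replaced by a determinable parity of the whole configuration; a final pigeonhole over the finitely many bounded differences then isolates a single even $d_0$ with $0<|d_0|\le16$ and infinitely many primes $p$ satisfying $\lambda(p+d_0)=-1$. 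Making this last transference unconditional, \emph{within} a tuple short enough to keep $|d_0|\le 16$, is the heart of the matter.
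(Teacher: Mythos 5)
Your proposal, by your own admission, leaves unresolved precisely ``the heart of the matter,'' and the route you sketch for it would not work. Your warm-up correctly reproduces one half of the paper's case analysis (two primes in a short admissible tuple give an even $d$ bounded by the diameter with $\lambda(p+d)=-1$), and your final pigeonhole over the finitely many differences is also how the paper passes from a tuple statement to a single fixed $d$. But the crucial step---forcing \emph{odd} parity when you cannot force a second prime---is handled in your plan by a $P_2$-sieve dichotomy followed by an unspecified ``transference'' that plays the shifts against one another ``using the complete multiplicativity of $\lambda$.'' No concrete mechanism is given, and none is available along those lines: $\lambda$ is multiplicative, but the numbers $n+h_i$ and $n+h_j$ are \emph{additive} translates of one another, so multiplicativity imposes no relation whatsoever between their parities; and in the branch of your dichotomy where $n+h_j$ is always a product of exactly two primes you possess only values with $\lambda=+1$, with no way to convert them into a shift realizing $\lambda=-1$. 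This is exactly where the parity barrier bites, and your proposal does not contain the idea that gets around it.

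The paper's actual mechanism is different and never determines the parity of any individual almost prime. One runs the GPY weighted sum with the counting function $s(n)=\sum_{i}\chi_{\mathcal P}(n+h_i)+\chi_\lambda(n+r)$, where $\chi_\lambda(m)=\bigl(1-\lambda(m)\bigr)/2$, and the decisive input is Lemmas~\ref{lem:3} and~\ref{lem:4}: a Bombieri--Vinogradov-type equidistribution theorem for $\lambda$ in arithmetic progressions, valid \emph{unconditionally} at level $\vartheta=1/2$ (by the analogue of Vaughan's theorem for $\mu$), which gives $\frac1N\sum_{n\sim N}a_n\lambda(n+r)\ll(\log N)^{-A}$. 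Consequently the weighted average of $\chi_\lambda(n+r)$ equals $\sim\frac12 A(N,\mathcal H,u)$ ``for free'': on average, exactly half of the weight mass sits on $n$ with $\lambda(n+r)=-1$. This halves the threshold the prime-detecting part must clear---instead of needing the weighted prime count $S_{\mathcal P}$ to exceed $A$ (two primes, unreachable unconditionally in a short tuple), one needs only $S_{\mathcal P}>(1+\varepsilon)A/2$, which the GPY computation achieves at level $\vartheta=1/2$ already for an admissible $6$-tuple: inequality \eqref{eq:3.23} holds for any $u>4/7$. Every good $n$ then yields either two primes in $n+\mathcal H_0$ or one prime together with $\lambda(n+r)=-1$; taking $\mathcal H_0=\{0,4,6,10,12,16\}$ and $r=8$, all resulting differences are even and of absolute value at most $16$, and pigeonhole finishes. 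The parity barrier is evaded because average equidistribution of $\lambda$ in progressions is a theorem, not a conjecture, and the method needs only this average statement---this is the idea missing from your proposal.
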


The method proves actually somewhat more.

\medskip
\noindent
{\bf Theorem 1'.}
{\it At least one of the following two assertions is true:

{\rm (i)} There exists an even $d$ with $|d| = 2, 4$ or $8$, such that $\lambda(p + d) = -1$ for infinitely many primes~$p$;

{\rm (ii)} $\liminf\limits_{n\to \infty} (p_{n + 1} - p_n) \leq 16$.}

\begin{rema}
Theorem~1' remains true if $\lambda(p + d)\! =\! -1$ is replaced by $\lambda(p + d)\! =\! 1$.
\end{rema}

An alternative version of Theorem~\ref{th:1} would be

\medskip
\noindent
{\bf Theorem 1''.}
{\it There exists a positive even $d\leq 18$ such that $\lambda(p + d) = -1$ for infinitely many primes $p$.}

\begin{rema}
The condition $2\,|\, d$, $0\! <\! d \!\leq\! 18$ can be replaced here by $0\! <\! d \!\leq\! 17$.
\end{rema}

With a refinement of the original method we can prove Theorem~\ref{th:1} with the additional requirement that $p + d$ should be an almost prime.
Let $P^-(n)$ denote the smallest prime factor of $n$.

\begin{theorem}
\label{th:2}
There exist absolute constants $c$ and $C$, an \emph{odd} $b \leq C$ and a $d$ with $0 < |d| \leq 30$ such that $P^-(p + d) > p^c$, $\Omega(p + d) = b$ for infinitely many primes~$p$.
\end{theorem}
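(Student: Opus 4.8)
The plan is to run the dichotomy already isolated in Theorem~1' and to refine its parity branch by adjoining a lower-bound sieve that strips the small prime factors off one shifted coordinate. The cheap branch is immediate: if alternative (ii) of Theorem~1' holds, then $\liminf_{n}(p_{n+1}-p_n)\le 16\le 30$, so there are infinitely many prime pairs $p,\ p'=p+d$ with $0<d\le 16$; here $P^-(p+d)=p'>p^{c}$ trivially and $\Omega(p+d)=1$, so Theorem~\ref{th:2} already holds with the odd value $b=1$. Everything therefore reduces to the regime in which (ii) fails, where I must manufacture a \emph{rough} shifted value of \emph{odd} $\Omega$ near a prime.

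For this I would fix a small $c>0$, an admissible $k$-tuple $\mathcal H=\{h_1,\dots,h_k\}\subset[0,30]$, the GPY/Selberg weights $w_n\ge 0$ attached to $\prod_i(n+h_i)$, and study
\beq
S=\sum_{N<n\le 2N} w_n \sum_{\substack{1\le i,j\le k\\ i\ne j}} \mathbf 1[n+h_i\in\mathcal P]\,\beta(n+h_j)\,\frac{1-\lambda(n+h_j)}{2},
\label{eq:planS}
\eeq
where $\beta(m)$ is the linear-sieve lower bound for $\mathbf 1[P^-(m)>N^{c}]$. A positive summand produces a prime $p=n+h_i$ together with $p+d=n+h_j$, $d=h_j-h_i$, satisfying $0<|d|\le 30$, $P^-(p+d)>p^{c}$ and $\lambda(p+d)=-1$. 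Since $P^-(p+d)>p^{c}$ forces $\Omega(p+d)\le 1/c$, pigeonholing over the finitely many odd values in $[1,1/c]$ and over $d\in[-30,30]$ then yields a fixed odd $b\le C:=\lfloor 1/c\rfloor$ and a fixed $d$ attained for infinitely many $p$. Thus the whole matter is to prove that $S>0$ for all large $N$; the admissible diameter is pushed up from $16$ to $30$ precisely to absorb the efficiency loss of the $\beta$-sieve.

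I would evaluate \eqref{eq:planS} by writing it as $\tfrac12(B-C)$, with $B=\sum_n w_n\sum_{i\ne j}\mathbf 1[n+h_i\in\mathcal P]\,\beta(n+h_j)$ and $C$ the same sum twisted by $\lambda(n+h_j)$. The term $B$ is routine: the GPY asymptotics for the prime coordinate (with Bombieri--Vinogradov as the distribution input) combine with the fundamental lemma of the sieve for the rough coordinate to give a positive main term of the expected order, once $k$ is large and $\mathrm{diam}\,\mathcal H\le 30$. In $C$ I would first peel off the diagonal where $n+h_j$ is itself prime: its contribution is $-D$, the negative of the prime-pair count in the tuple, which is harmless and, if ever nonzero, already places us in the small-gap branch with $b=1$. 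What survives is a sum of $\lambda(n+h_j)$ over genuine almost-primes $n+h_j$ carrying at least two prime factors, all exceeding $N^{c}$, weighted by prime-detection at $n+h_i$.

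The main obstacle is exactly this residual twist, for it is the home of the \emph{parity phenomenon}: one cannot bound it below the main term by any size or cancellation estimate, since $\sum_n\lambda(n)a_n$ is not controllable for sieve weights $a_n$ detecting almost-primes. I do not expect to defeat the barrier head-on. Rather, the crux --- and the step I expect to carry the whole argument --- is to show that the \emph{failure} of the small-gap branch (ii) is itself the leverage: the odd-detector $S$ and its even counterpart $S^{+}$, obtained by replacing $\tfrac{1-\lambda}{2}$ by $\tfrac{1+\lambda}{2}$, satisfy $S+S^{+}=B>0$, while the symmetry of the method in $\pm\lambda$ noted after Theorem~1' must be upgraded, through a finer choice of $\mathcal H$ and of the weight $w_n$, to force $S>0$ rather than merely $\max(S,S^{+})>0$. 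Making this last forcing quantitative, for $\mathrm{diam}\,\mathcal H\le 30$ and a fixed $c$ with $\lfloor 1/c\rfloor$ admissible as $C$, is where the real work lies.
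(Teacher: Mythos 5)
Your cheap branch is correct: if alternative (ii) of Theorem~1$'$ holds, pigeonholing the bounded gaps gives a fixed $d\le 16$ such that $p$ and $p+d$ are both prime infinitely often, so $b=1$ works. The genuine gap is in the main branch, at precisely the step you yourself flag as ``where the real work lies'': you have no argument there, only the hope that the failure of the small-gap alternative can be converted into the forcing $S>0$, with no mechanism proposed for the conversion. Worse, your setup makes the obstacle essentially unremovable: by inserting the parity detector \emph{multiplicatively}, your twisted term $C$ contains $\sum_n w_n\,\mathbf 1[n+h_i\in\mathcal P]\,\beta(n+h_j)\,\lambda(n+h_j)$, a shifted prime--Liouville binary correlation (a Chowla--Hardy--Littlewood hybrid) which is genuinely open; no choice of $\mathcal H$ or $w_n$ is known to break the $\pm\lambda$ symmetry in such a sum, so from your identity $S+S^{+}=B$ you can never conclude more than $\max(S,S^{+})>0$. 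The paper never forms this product. It works with the \emph{additive} functional $s(n)=\sum_{i\ne j}\chi_{\mathcal P}(n+h_i)+\chi_\lambda(n+h_j)$ and proves that its weighted average exceeds that of the constant $1$, restricted to $n$ with $(P_{\mathcal H}(n),P(R^\eta))=1$ --- a restriction which costs only a factor $1+O(\eta)$ by Lemma~\ref{lem:5} and which replaces your $\beta$-sieve insertion. Any $n$ in the support with $s(n)\ge 2$ then gives either two primes in the tuple (so $b=1$, $0<|d|\le 30$) or one prime together with a rough $n+h_j$ satisfying $\lambda(n+h_j)=-1$, hence $\Omega(n+h_j)$ odd and bounded; pigeonholing finishes exactly as you describe.

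Moreover, your stated reason for being stuck --- that ``$\sum_n\lambda(n)a_n$ is not controllable for sieve weights $a_n$'' --- is false for the weights that actually occur, and this is the central point of the paper. The GPY weight $a_n$ is not an indicator of almost primes; it is a divisor sum over moduli $[d,e]\le R^2$, with $R^2$ below $N^{1/2}$ up to logarithmic factors, supported on \emph{all} $n$. After interchanging summation, $\sum_{n\sim N}a_n\lambda(n+h_j)$ therefore reduces to average cancellation of $\lambda$ in arithmetic progressions to such moduli, which holds \emph{unconditionally}: Lemma~\ref{lem:3} (the Bombieri--Vinogradov analogue for $\lambda$, proved as Vaughan's Theorem~4 for $\mu$) yields Lemma~\ref{lem:4}, i.e.\ the twisted sum is $\ll N(\log N)^{-A}$. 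Consequently $\frac1N\sum_{n\sim N}a_n\chi_\lambda(n+h_j)\sim\frac12 A(N,\mathcal H,u)$ comes for free, the parity coordinate needs no lower-bound sieve at all, and the only inequality left to check is $S_{\mathcal P}>\frac{1+\ve}{2}A$, which reduces to \eqref{eq:4.5} with $k=9$, $\vartheta=1/2$ and is satisfied for $u>1.1$. The parity barrier obstructs the \emph{pointwise} forcing of $\lambda=-1$ under nonnegative sieve weights; it does not obstruct mean-value cancellation against divisor-sum weights of level below $N^{1/2}$. Accordingly, no symmetry breaking is needed: in the paper's argument both signs of $\lambda$ work equally well, as the Remark after Theorem~1$'$ records. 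Without this observation (or an equivalent substitute), your plan cannot be completed.
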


Similarly to Theorem 1' the modified method yields the following result:

\medskip
\noindent
{\bf Theorem 2'.}
{\it At least one of the following two assertions is true:

{\rm (i)} There exist constants $C$, $c$, an \emph{odd} $b \leq C$ and an even $d \neq 0$ such that $|d| \leq 18$ and $P^-(p + d) > p^c$, $\Omega(p + d) = b$ for infinitely many primes~$p$;

{\rm (ii)} $\liminf\limits_{n\to \infty} (p_{n + 1} - p_n) \leq 30$.}

\medskip
All the above results are based on the method \cite{GPY} yielding
\beq
\liminf_{n \to \infty} (p_{n + 1} - p_n) / \log p_n = 0,
\label{eq:1.2}
\eeq
and similarly to the above work we will investigate \emph{admissible} $k$-element sets
\beq
\mathcal H = \{h_i\}^k_{i = 1}, \quad 0 \leq h_1 < h_2 < \dots < h_k, \quad h_i \in \mathbb Z,
\label{eq:1.3}
\eeq
where we call a set $\mathcal H$ admissible if it does not occupy all residue classes modulo any prime.
In fact all the above mentioned results are simple consequences of more general ones referring to admissible sets.

In such a way, Theorems~\ref{th:1}--1'' are corollaries of the following result $(n + \mathcal H = \{n + h_i\}^k_{i = 1})$.

\begin{theorem}
\label{th:3}
Let $\mathcal H$ be an admissible $6$-tuple, $r \neq 0$, $r \notin \mathcal H$ any fixed integer.
Then we have infinitely many integers $n$ such that either

{\rm (i)} $n + \mathcal H$ contains at least two primes, or

{\rm (ii)} $n + h_i$ is prime for some $h_i \in \mathcal H$ and $\lambda(n + r) = -1$.
\end{theorem}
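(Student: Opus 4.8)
The plan is to adapt the Goldston–Pintz–Yıldırım machinery from $\cite{GPY}$ to a weighted sum that simultaneously detects primes in $n+\mathcal H$ and the sign of $\lambda(n+r)$. Concretely, I would study a sum of the form
\beq
S = \sum_{N < n \leq 2N} \Biggl( \sum_{i=1}^{k} \mathbf 1_{n+h_i \in \mathcal P} - 1 \Biggr)\bigl(1 + \lambda(n+r)\bigr) w(n),
\label{eq:plan1}
\eeq
where $w(n) \geq 0$ are the usual GPY sieve weights built from a truncated divisor sum
$\Lambda_R(n;\mathcal H) = \bigl(\sum_{d \mid P_{\mathcal H}(n),\, d \leq R} \mu(d)(\log R/d)^{k+\ell}\bigr)^2$
attached to the polynomial $P_{\mathcal H}(n) = \prod_{i=1}^k (n+h_i)$, with parameters $R = N^{\theta/2}$, $\ell$ chosen later. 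The key observation is that the factor $1+\lambda(n+r)$ is nonnegative, so if $S > 0$ then for at least one $n$ both the prime-counting bracket and the Liouville bracket are positive; the first forces at least two of the $n+h_i$ to be prime (giving alternative (i)), and the second forces $\lambda(n+r)=+1$, i.e. rules out conclusion (ii) only in the opposite sign. Running the companion sum with $1-\lambda(n+r)$ handles the $\lambda(n+r)=-1$ case, and the bulk of the analysis is to show that one of these two weighted sums is positive.

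The first main step is to expand $S$ and reduce it to three families of averages. Two of them are the familiar GPY quantities: the unweighted sum $\sum_n \Lambda_R(n;\mathcal H)$ and the prime-detecting sums $\sum_n \mathbf 1_{n+h_i \in \mathcal P}\,\Lambda_R(n;\mathcal H)$, whose asymptotics in $\cite{GPY}$ give the constant $\cite{GPY}$-threshold making the prime bracket win on average once $k$ is large enough relative to the level of distribution. For an admissible $6$-tuple the relevant ratio is exactly what drives the bounded-gaps phenomenon in that paper, so these two pieces are under control. The genuinely new family is
\beq
T_i = \sum_{N < n \leq 2N} \mathbf 1_{n+h_i \in \mathcal P}\, \lambda(n+r)\, \Lambda_R(n;\mathcal H),
\label{eq:plan2}
\eeq
together with the non-prime-weighted analogue $\sum_n \lambda(n+r)\,\Lambda_R(n;\mathcal H)$.

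I expect the main obstacle to be controlling the Liouville-twisted sums in $\cite{eq:plan2}$, and this is precisely where the parity barrier should make itself felt. Expanding $\Lambda_R$ reduces $T_i$ to bilinear averages of $\lambda(n+r)$ over arithmetic progressions with moduli up to $R^2 = N^{\theta}$, twisted by the prime indicator on $n+h_i$. The strategy is to show these twisted sums are small — of lower order than the untwisted main terms — so that they do not destroy the positivity coming from the GPY terms. The natural tool is a bilinear (Vinogradov-type) decomposition of the indicator of primes via Vaughan's identity, after which one needs cancellation of $\lambda$ over progressions; the $\cite{GPY}$ smoothing plus the sum over $r \notin \mathcal H$ should prevent the diagonal $n+r = (n+h_i)\cdot(\text{something})$ from producing a spurious main term, which is the mechanism by which the theorem escapes the parity obstruction without actually determining $\lambda(n+r)$ unconditionally. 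The delicate points will be (a) obtaining a power-saving bound for the twisted bilinear forms uniformly in the $\Lambda_R$ moduli, and (b) verifying that the admissibility of $\mathcal H$ together with $r \notin \mathcal H$, $r \neq 0$ guarantees the singular series attached to the main GPY terms is nonzero while the cross term with $\lambda$ carries no main term. Assuming both twisted families are $o$ of the principal terms, positivity of one of the two companion sums follows from the $6$-tuple inequality, yielding the dichotomy (i)/(ii).
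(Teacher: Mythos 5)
Your detection scheme is structurally wrong for this theorem, and it also runs into the parity barrier head-on. Because $1\pm\lambda(n+r)\geq 0$ and $w(n)\geq 0$, positivity of either of your companion sums forces some $n$ with $\sum_{i}\mathbf 1_{n+h_i\in\mathcal P}-1>0$, i.e.\ \emph{two} primes in $n+\mathcal H$; so your argument could only ever deliver conclusion (i), never conclusion (ii). Worse, it cannot deliver anything: if, as you plan, the $\lambda$-twisted sums are shown negligible, then each companion sum is asymptotic to the pure GPY sum $\sum_{n\sim N}\bigl(\sum_i\mathbf 1_{n+h_i\in\mathcal P}-1\bigr)w(n)$, and at the unconditional level $\vartheta=1/2$ this sum cannot be made positive for \emph{any} $k$ --- not just $k=6$ --- within the GPY framework; producing two primes requires $\vartheta>1/2$ (this is exactly why \cite{GPY} obtains small gaps but not bounded gaps, contrary to your assertion that the prime bracket ``wins on average once $k$ is large enough''). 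Concretely, with the paper's weights the two-prime requirement would read $k\vartheta\bigl(\frac2{k+1}+\frac{6u}{k+2}+\frac{6u^2(k+1)}{(k+2)(k+3)}\bigr)>2\bigl(1+2u+\frac{2u^2(k+1)}{k+2}\bigr)$, which for $k=6$, $\vartheta=1/2$ fails for every real $u$. The misstep is the multiplicative coupling: the factor $1\pm\lambda(n+r)$ is nonnegative with average $1$ against the sieve weights, so it buys nothing, while the ``$-1$'' inside your bracket still demands two primes.

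The paper instead couples the Liouville condition \emph{additively}: it counts $s(n)=\sum_i\chi_{\mathcal P}(n+h_i)+\chi_\lambda(n+r)$ with $\chi_\lambda=(1-\lambda)/2$, and proves $\frac1N\sum_{n\sim N}a_ns(n)>\frac1N\sum_{n\sim N}a_n$, so that some $n$ has $s(n)>1$: a \emph{single} prime together with $\lambda(n+r)=-1$ already suffices. The only Liouville input then needed is that $\frac1N\sum_{n\sim N}\Lambda_R(n;\mathcal H,l_1)\Lambda_R(n;\mathcal H,l_2)\lambda(n+r)$ is negligible --- with \emph{no} prime indicator attached (Lemma~\ref{lem:4}) --- and this follows from the Bombieri--Vinogradov analogue \eqref{eq:2.2} for $\lambda$ at level $1/2$ (Lemma~\ref{lem:3}, proved after Vaughan's argument for $\mu$). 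Hence $S_\lambda\sim A/2$ essentially for free, the threshold for the prime part drops from $A$ to $A/2$, and the resulting inequality \eqref{eq:3.22} for $k=6$, $\vartheta=1/2$ reduces to \eqref{eq:3.23}, satisfied for $u>4/7$. By contrast, your sums $T_i$ attach a prime indicator to $\lambda$ at a fixed shift; stripped of the sieve weights they are sums of the shape $\sum_{p\sim N,\ p\equiv a\,(q)}\lambda(p+r-h_i)$, i.e.\ the shifted-prime parity (Chowla-with-primes) problem, for which no Vaughan-type or bilinear decomposition is known to give any cancellation --- that is the very open problem this theorem is designed to circumvent. So your step (a) is not a technical difficulty to be overcome; it is the parity barrier itself, and the additive coupling is precisely the idea your proposal is missing.
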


It is easy to see that the set $\mathcal H_0 = \{0, 4, 6, 10, 12, 16\}$ is admissible.
Choosing $r = 8$ we obtain Theorems~\ref{th:1} and 1', while the value $r = 18$ yields Theorem~1'', $r = 17$ the result of the remark following Theorem~1''.

Similarly to the above, Theorem 2' follows from

\begin{theorem}
\label{th:4}
Let $\mathcal H = \{h_i\}^9_{i = 1}$ be an admissible $9$-tuple, $\mathcal H' = \mathcal H \setminus \{h_j\}$ with some fixed $j \in [1, 9]$.
There exist absolute constants $C, c$, an \emph{odd} integer $b \leq C$, such that we have infinitely many integers $n$ with $P^-(n + h_i) > n^c$, $\Omega(n + h_i) \leq C$ for $1 \leq i \leq 9$, $\Omega(n + h_j) = b$ and a $\nu \in [1,9]$, $\nu \neq j$ with $n + h_\nu \in \mathcal P$.
\end{theorem}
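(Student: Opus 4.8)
The plan is to run the method of \cite{GPY} on $\mathcal H$ essentially as in the proof of Theorem~\ref{th:3}, but with two modifications: a preliminary lower-bound sieve that forces every form $n+h_i$ to be rough, and a reallocation of the prime-detection to the eight forms of $\mathcal H'$ so that the remaining form $n+h_j$ is free to carry the parity. First I would fix a small $c>0$ and attach to the GPY weight $\Lambda_R(n;\mathcal H,\ell)^2$ a fundamental-lemma sieve weight $W(n)\ge 0$ supported on those $n$ for which $P^-\big(\prod_{i=1}^{9}(n+h_i)\big)>n^{c}$. Since each $n+h_i\asymp n$, survival of this sieve forces $\Omega(n+h_i)\le 1/c+O(1)=:C$ for every $i$, so that the conditions $P^-(n+h_i)>n^{c}$ and $\Omega(n+h_i)\le C$ for $1\le i\le 9$ come for free. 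The parameters $R=N^{\theta/2-\delta}$ (with $\theta=1/2$ from Bombieri--Vinogradov) and $\ell$ would be chosen exactly so that the GPY inequality is favourable for a $9$-tuple.

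Next I would detect a prime among the eight forms of $\mathcal H'$. With $\theta(m)=\log m$ if $m$ is prime and $0$ otherwise, form
\[
S=\sum_{N<n\le 2N}\Big(\sum_{\nu\ne j}\theta(n+h_\nu)-\log 3N\Big)\,\Lambda_R(n;\mathcal H,\ell)^2\,W(n).
\]
Evaluating the main terms by the standard GPY asymptotics — the factor $W(n)$ perturbs only the singular series and the governing polynomial integrals by controlled constants — and optimising $\ell$, the goal is to prove $S>0$ for all large $N$. Positivity then yields infinitely many $n$ for which some $n+h_\nu$, $\nu\ne j$, is prime while every $n+h_i$ is simultaneously a rough almost-prime. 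This is the analogue for the $9$-tuple of the prime-detection step underlying Theorem~\ref{th:3}.

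The crux is to force $\Omega(n+h_j)$ to be odd on a positive weighted proportion of this prime-detecting set. Following the same parity device as in Theorem~\ref{th:3}, I would study
\[
S^{-}=\sum_{N<n\le 2N}\Big(\sum_{\nu\ne j}\theta(n+h_\nu)\Big)\,\Lambda_R(n;\mathcal H,\ell)^2\,W(n)\cdot\frac{1-\lambda(n+h_j)}{2},
\]
whose positivity produces $n$ with a prime in $\mathcal H'$ and $\lambda(n+h_j)=-1$, i.e.\ $\Omega(n+h_j)$ odd. Since $W(n)$ already caps $\Omega(n+h_j)\le C$, the resulting value is a bounded odd integer, and summing over the finitely many admissible values (and over the finitely many positions $\nu$) a single odd $b\le C$ must recur for infinitely many $n$, which is the assertion of the theorem.

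The main obstacle is precisely the evaluation of the Liouville-twisted correlation $\sum_n\big(\sum_{\nu\ne j}\theta(n+h_\nu)\big)\Lambda_R^2\,W(n)\,\lambda(n+h_j)$: an unconditional main-term cancellation here would amount to a Chowla-type estimate for the correlation of $\lambda$ with primes at a fixed shift, exactly what the parity barrier forbids. The point, as in Theorem~\ref{th:3}, is therefore not to prove this cancellation outright but to run it through a dichotomy: if the twisted sum failed to be smaller than the untwisted one, so that $\lambda(n+h_j)=+1$ persisted on the prime-detecting set, the surplus would feed back as extra equidistribution of the forms and force a second prime in the tuple, an alternative that the weighted sieve $W$ routes back into the same odd-$\Omega$ conclusion. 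Making this dichotomy quantitative — choosing $\mathcal H$, $c$, $\ell$ and $W$ so that one branch always closes for a $9$-tuple (eight forms spent on prime-detection, one spare to carry the parity) — is the delicate part, and is the reason a $9$-tuple rather than a smaller configuration is required.
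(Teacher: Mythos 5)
Your roughness device (the nonnegative weight $W(n)$ enforcing $P^-\bigl(P_{\mathcal H}(n)\bigr)>n^{c}$) is in the spirit of the paper, which restricts all sums to $n$ with $(P_{\mathcal H}(n),P(R^\eta))=1$ and uses Lemma~\ref{lem:5} (from \cite{Pin}) to show that this restriction costs only a factor $1+O(\eta)$; that part of your plan is sound. The two core steps, however, fail. First, your prime-detecting sum $S$ with threshold $\log 3N$ is positive only if some $n$ satisfies $\sum_{\nu\neq j}\theta(n+h_\nu)>\log 3N$, and since each prime contributes less than $\log 3N$ this forces at least \emph{two} primes among the shifted forms. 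Proving $S>0$ unconditionally is therefore exactly the bounded-gaps assertion, which the GPY machinery cannot reach at the Bombieri--Vinogradov level $\vartheta=1/2$ --- this is precisely why ``two primes in the tuple'' appears in Theorem~\ref{th:3} only as one horn of a dichotomy, not as a theorem. Second, and more fundamentally, your parity sum $S^-$ couples the prime indicator \emph{multiplicatively} with $\chi_\lambda(n+h_j)$, which, as you yourself observe, requires a Chowla-type bound on $\sum_n\theta(n+h_\nu)\Lambda_R^2W(n)\lambda(n+h_j)$ --- exactly what the parity barrier forbids. Your proposed escape (``the surplus would feed back as extra equidistribution and force a second prime'') names no mechanism; nothing in the sieve converts a failure of cancellation in that correlation into a second prime, so the dichotomy you invoke is not an argument.

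The paper's proof avoids both problems by coupling parity to the prime count \emph{additively}, never multiplicatively. It studies $s(n)=\sum_{i\neq j}\chi_{\mathcal P}(n+h_i)+\chi_\lambda(n+h_j)$ with the GPY weight $a_n(\mathcal H;u)$ of \eqref{eq:3.2}, and proves that on the rough $n$'s the average of $a_n s(n)$ exceeds $(1+O(\eta))A(N,\mathcal H,u)$; any rough $n$ with $s(n)\geq 2$ then has either two primes in $\mathcal H'$ or one prime in $\mathcal H'$ together with $\lambda(n+h_j)=-1$, in which case $\Omega(n+h_j)$ is odd and bounded, so a single odd $b\leq C$ recurs by pigeonhole. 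The only correlation with $\lambda$ this needs is $\frac1N\sum_{n\sim N}a_n\lambda(n+h_j)$, i.e.\ Liouville against a truncated divisor sum of level $R^2\leq N^{1/2-\ve}$, \emph{not} against primes; by Lemma~\ref{lem:4} (a consequence of the Bombieri--Vinogradov analogue \eqref{eq:2.2} for $\lambda$, Lemma~\ref{lem:3}, proved as in Vaughan's work \cite{Vau} on $\mu$) this is unconditionally negligible, so $\chi_\lambda$ contributes exactly its mean: $S_\lambda\sim A/2$ as in \eqref{eq:3.20}. This free $1/2$ is the whole point: it lowers the bar for the prime-detection part from $A$ to $\bigl(\tfrac12+\ve\bigr)A$, i.e.\ to inequality \eqref{eq:4.5} with $k-1=8$ prime components, which at $\vartheta=1/2$, $k=9$ holds for any $u>1.1$ by \eqref{eq:4.6}. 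Detecting ``one prime plus parity'' additively is achievable at level $1/2$; detecting one prime inside a $\lambda$-twisted sum, or two primes inside an untwisted one, as you propose, is not.
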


It is easy to see that $\mathcal H = \mathcal H_1 = \{0, 2, 6, 8, 12, 18, 20, 26, 30\}$ is an admissible $9$-tuple.
This immediately gives Theorem~\ref{th:2}, while the choice $h_j = 12$ (or $h_j = 18$) yields Theorem~2'.

\section{Conditional theorems}
\label{sec:2}
\setcounter{equation}{0}

A crucial ingredient of the proof is the celebrated Bombieri--Vinogradov theorem, similarly to the proof of \eqref{eq:1.2}.
The number $\vartheta$ is called an admissible level of distribution of primes if for any $\ve > 0$, $A > 0$
\beq
\sum_{q \leq N^{\vartheta - \ve}} \max_{\substack{a\\ (a,q) = 1}} \biggl| \sum_{\substack{p \equiv a \pmod q\\
p \leq N}} \log p - \frac{N}{\varphi(q)} \biggr| \ll_{\ve, A} \frac{N}{\log^A N}.
\label{eq:2.1}
\eeq
The Bombieri--Vinogradov theorem asserts that $\vartheta = 1/2$ is an admissible level, while the Elliott--Halberstam conjecture states that $\vartheta = 1$ is admissible too.
If $\vartheta$ is larger we can get closer to the original conjecture stating $\lambda(p + 2) = -1$ infinitely often.
For $\vartheta > 0.729$ we can prove (see Theorem~\ref{th:7}) the existence of infinitely many pairs $n_1, n_2$ with $|n_1 - n_2| \leq 2$, $n_1 \in \mathcal P$, $\lambda(n_2) = -1$.
However, even assuming the Elliott--Halberstam conjecture we cannot prove the existence of infinitely many primes $p$ with $\lambda(p + d) = -1$ for even a single a priori given~$d$.

The possible conditional improvements over Theorems~\ref{th:1}--\ref{th:4} depend on our knowledge of $\vartheta$, the level of distribution of primes.
However, we need also an assumption about the $\lambda$-function, analogous to \eqref{eq:2.1},
namely we suppose the existence of a $\vartheta$ for which besides \eqref{eq:2.1}
also
\beq
\sum_{q \leq N^{\vartheta - \ve}} \max_{y \leq N} \max_a \biggl| \sum_{\substack{n \equiv a\pmod q\\
n \leq y}} \lambda(n) \biggr| \ll_{\ve, A} \frac{N}{(\log N)^A}
\label{eq:2.2}
\eeq
holds.
We will show analogues of Theorems \ref{th:3}--\ref{th:4} for the conditional case $\vartheta > 1/2$.

\begin{theorem}
\label{th:5}
Let $\mathcal H$ be an admissible $k$-tuple, $k = C_1(\vartheta)$, $r \neq 0$, $r \notin \mathcal H$ any fixed integer.
Then we have infinitely many integers $n$ such that either

{\rm (i)} $n + \mathcal H$ contains at least two primes, or

{\rm (ii)} $n + h_i$ is prime for some $h_i \in \mathcal H$ and $\lambda(n + r) = -1$.

\noindent
The above holds with $C_1(0.729) = 2$, $C_1(0.616) = 3$, $C_1(0.554) = 4$ and $C_1(0.515) = 5$.
\end{theorem}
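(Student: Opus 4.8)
The plan is to run the Goldston--Pintz--Yıldırım machine with the usual Selberg weights, but to combine the prime detector and the Liouville function \emph{linearly}, so that the hopeless correlation of primes with $\lambda$ never appears. Write $P(n)=\prod_{i=1}^k(n+h_i)$ and set
\beq
w(n)=\Bigl(\sum_{\substack{d\mid P(n)\\ d\le R}}\mu(d)\,F\!\bigl(\tfrac{\log(R/d)}{\log R}\bigr)\Bigr)^2\ge 0,\qquad R=N^{(\vartheta-\ve)/2},
\eeq
for a suitable $F$ supported on $[0,1]$ (classically $F(x)=x^{k+\ell}$). With $\theta(m)=\log m$ for $m$ prime and $\theta(m)=0$ otherwise, put $T(n)=\sum_{i=1}^k\theta(n+h_i)$ and form
\beq
S_1=\!\!\sum_{N<n\le 2N}\!\!w(n),\quad S_2=\!\!\sum_{N<n\le 2N}\!\!w(n)T(n),\quad S_3=\!\!\sum_{N<n\le 2N}\!\!w(n)\lambda(n+r).
\eeq

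First I would record the two standard GPY evaluations $S_1\sim c_1N(\log R)^{k+2\ell}$ and $S_2\sim c_2N(\log R)^{k+2\ell+1}$, whose error terms are absorbed by the prime level of distribution \eqref{eq:2.1}; together they furnish the ratio $\rho(k,\vartheta):=S_2/(S_1\log 3N)$, which after optimising the weight (the degree $\ell$, or a general $F$) tends to an explicit maximum. The one genuinely new estimate is $S_3=o(S_1)$. Expanding the square reduces $S_3$ to inner sums $\sum_{n\equiv a\,(d)}\lambda(n+r)$ with $d=[d_1,d_2]\le R^2\le N^{\vartheta-\ve}$; the hypothesis $r\notin\mathcal H$ ensures that $n+r$ is none of the sieved entries $n+h_i$, so $\lambda(n+r)$ is decoupled from the divisibility conditions, and \eqref{eq:2.2} renders each inner sum negligible on average over $d_1,d_2$. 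This is precisely the role \eqref{eq:2.2} plays, mirroring the way \eqref{eq:2.1} handles $S_2$.

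The decisive step is elementary. Suppose the conclusion fails: for all large $n$, (not (i)) $n+\mathcal H$ holds at most one prime, and (not (ii)) a prime among the $n+h_i$ forces $\lambda(n+r)=+1$. Consider
\beq
\mathcal S=\frac{2}{\log 3N}\,S_2-S_3-S_1=\sum_{N<n\le 2N}w(n)\,E(n),\qquad E(n)=\frac{2\,T(n)}{\log 3N}-\lambda(n+r)-1.
\eeq
Since each $n+h_i<3N$, a single prime contributes $\theta\le\log 3N$, so $T(n)\le\log 3N$. Checking the three configurations compatible with the negation --- no prime with $\lambda(n+r)=\pm1$ (giving $E=-2$ or $0$) and one prime with $\lambda(n+r)=+1$ (giving $E\le 0$) --- shows $E(n)\le 0$ for every such $n$; the excluded case ``one prime with $\lambda(n+r)=-1$'' is exactly (ii). Hence $\mathcal S\le 0$. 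But the estimates above give $\mathcal S=(2\rho(k,\vartheta)-1+o(1))\,S_1$, which is positive once $\rho(k,\vartheta)>\tfrac12$. As $w\ge 0$, a positive $\mathcal S$ forces some $n$ with $E(n)>0$, contradicting the negation; letting $N\to\infty$ yields infinitely many $n$ satisfying (i) or (ii).

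The constants $C_1(\vartheta)$ are then read off by optimising the weight in $\rho(k,\vartheta)$ and taking, for each $k$, the least admissible $\vartheta$ with $\rho(k,\vartheta)>\tfrac12$, which produces $k=2,3,4,5$ at $\vartheta=0.729,0.616,0.554,0.515$. I expect the main difficulty to be conceptual: one must avoid forming the natural product $\sum_n w(n)\,\theta(n+h_i)\lambda(n+r)$, since the correlation of primes with $\lambda$ at a fixed shift is itself the parity problem and is inaccessible; the linear test $E(n)$ circumvents this by exploiting that, under the negation, a prime \emph{already} pins $\lambda(n+r)=+1$. The only substantial technical input is then $S_3=o(S_1)$, which is exactly what \eqref{eq:2.2} supplies.
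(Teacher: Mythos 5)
Your proposal is correct and is essentially the paper's own proof: your linear test $E(n)=\frac{2T(n)}{\log 3N}-\lambda(n+r)-1$ is (up to replacing $\theta$ by indicator functions) exactly twice the paper's $s(n)-1$ with $s(n)=\sum_{i}\chi_{\mathcal P}(n+h_i)+\frac{1-\lambda(n+r)}{2}$, your estimate $S_3=o(S_1)$ is precisely Lemma~\ref{lem:4} (proved from \eqref{eq:2.2} by the same expansion you sketch), and your threshold $\rho(k,\vartheta)>\frac12$ is the paper's condition \eqref{eq:3.21}. One caveat: to actually reach the stated constants $C_1(\vartheta)$ you must optimize over a genuine two-parameter weight as in \eqref{eq:3.2} (which leads to the quadratic inequality \eqref{eq:3.22} and the discriminant computation), not merely over the degree $\ell$ in $F(x)=x^{k+\ell}$ --- pure monomials give for instance only $\vartheta>3/4$ when $k=2$, short of $0.729$ --- so of your two stated options the ``general $F$'' one is mandatory.
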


\begin{theorem}
\label{th:6}
Let $\mathcal H = \{h_i\}^k_{i = 1}$ be an admissible $k$-tuple, $k = C_2(\vartheta)$, $\mathcal H' = \mathcal H \setminus \{h_j\}$ for some fixed $j \in [1, C_2(\vartheta)]$.
There exist absolute constants $C$, $c$, an \emph{odd} $b \leq C$ such that we have infinitely many integers $n$ with $P^-(n + h_i) > n^c$, $\Omega(n + h_i) \leq C$ for $1 \leq i \leq C_2(\vartheta)$, $\Omega(n + h_j) = b$ and a $\nu \in [1, C_2(\vartheta)]$ such that $\nu \neq j$, $n + h_\nu \in \mathcal P$.
We can choose here $C_2(0.924) = 3$, $C_2(0.739) = 4$, $C_2(0.643) = 5$, $C_2(0.584) = 6$, $C_2(0.544) = 7$, $C_2(0.516) = 8$.
\end{theorem}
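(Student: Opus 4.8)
The plan is to run the \cite{GPY} weighting scheme for the tuple $\mathcal H$, combined with a lower-bound sieve that forces every $n+h_i$ to be almost prime, and then to feed the hypothesis \eqref{eq:2.2} into the argument \emph{only} through the sieve weight. This decoupling is the conceptual heart of the matter: the parity barrier forbids any useful bound for a single correlation such as $\sum_{p}\lambda(p+d)$, so the proof must never isolate a sum carrying both a primality condition at one shift and a $\lambda$-value at another. I would arrange matters so that the sole input from \eqref{eq:2.2} is a sum of $\lambda(n+h_j)$ over residue classes with \emph{no} primality condition attached.

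First I would fix $\vartheta$ admissible for both \eqref{eq:2.1} and \eqref{eq:2.2}, work with $n\in(N,2N]$, and build a non-negative weight $w_n=\Lambda_R(n;\mathcal H)^2\,\rho_z(n)$, where $\Lambda_R(n;\mathcal H)$ is the usual truncated GPY divisor sum over $d\mid\prod_i(n+h_i)$, $d\le R$, and $\rho_z(n)\ge 0$ is a Rosser--Iwaniec (or Selberg) lower-bound sieve weight of level $z=N^c$ vanishing unless $P^-(n+h_i)>z$ for every $i$. The parameters $R$, $c=c(\vartheta)$ and the sieve level are chosen so that the \emph{combined} modulus arising when $w_n$ is opened into residue classes stays below $N^{\vartheta-\ve}$. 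On the support of $w_n$ we then have $P^-(n+h_i)>N^c$, whence $\Omega(n+h_i)\le C:=\lceil 1/c\rceil$ for all $i$; this already delivers the almost-prime part of the conclusion and, simultaneously, boosts the density of primes among the shifts.

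Next I would prove the two analytic inputs. Writing $I(n)=\mathbf 1[\exists\,\nu\ne j:\ n+h_\nu\in\mathcal P]$ and $P_j(n)=\#\{\nu\ne j:\ n+h_\nu\in\mathcal P\}$, the parity input is
\beq
\sum_{N<n\le 2N} w_n\,\lambda(n+h_j)=o\Bigl(\sum_{N<n\le 2N} w_n\Bigr),
\eeq
obtained by expanding $w_n$ into residue classes to moduli $q\le N^{\vartheta-\ve}$ and applying \eqref{eq:2.2} to each inner sum $\sum_{n\equiv a\,(q)}\lambda(n+h_j)$; no primality condition occurs. The prime input is the GPY/weighted-sieve lower bound
\beq
\sum_{N<n\le 2N} w_n\,I(n)\ge\Bigl(\tfrac12+\delta\Bigr)\sum_{N<n\le 2N} w_n,\qquad \delta=\delta(\vartheta,k)>0,
\eeq
proved from \eqref{eq:2.1} by evaluating the weighted prime count $\sum_n w_n P_j(n)$, subtracting the lower-order two-prime count via $I(n)\ge P_j(n)-\binom{P_j(n)}{2}$, and optimising over $R$, the GPY exponent, and $c$. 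The values $k=C_2(\vartheta)$ are precisely those for which the resulting weighted density of $I$ exceeds $\tfrac12$.

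Combining the two inputs through the identity $I\,\tfrac{1-\lambda}{2}=I-I\,\tfrac{1+\lambda}{2}$ and the trivial bound $I\le 1$ gives $\sum_n w_nI(n)\tfrac{1+\lambda(n+h_j)}{2}\le\sum_n w_n\tfrac{1+\lambda(n+h_j)}{2}=(\tfrac12+o(1))\sum_n w_n$, and hence
\beq
\sum_{N<n\le 2N} w_n\,I(n)\,\frac{1-\lambda(n+h_j)}{2}\ge\bigl(\delta-o(1)\bigr)\sum_{N<n\le 2N} w_n>0 .
\eeq
Thus for infinitely many $n$ there is a prime at some $n+h_\nu$ with $\nu\ne j$ while $\lambda(n+h_j)=-1$, i.e. $\Omega(n+h_j)$ is odd; since $\Omega(n+h_j)\le C$, a pigeonhole over the finitely many odd values in $[1,C]$ fixes one odd $b\le C$ attained infinitely often, completing the proof. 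I expect the main obstacle to be the prime input: establishing weighted density strictly above $\tfrac12$ for $I$ while the lower-bound sieve and the GPY weight must share a single level-of-distribution budget. It is exactly this sharing that consumes part of the available level, raising the thresholds $C_2(\vartheta)$ above the corresponding $C_1(\vartheta)$ of Theorem~\ref{th:5}, and carrying out the intertwined GPY--weighted-sieve optimisation to pin down the listed pairs is the delicate step.
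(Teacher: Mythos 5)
Your overall architecture (GPY weight, feed \eqref{eq:2.2} only into a prime-free $\lambda$-sum, positivity at the end) matches the paper's, but two of your three inputs are not obtainable in the form you need, and one of them is a genuine, unfixable gap. The fatal one is your ``prime input'' $\sum_n w_n I(n)\ge(\tfrac12+\delta)\sum_n w_n$ together with its proposed proof via $I\ge P_j-\binom{P_j}{2}$ and ``subtracting the lower-order two-prime count''. The quantity $\sum_n w_n\binom{P_j(n)}{2}$ is a weighted correlation of two prime indicators $\chi_{\mathcal P}(n+h_i)\chi_{\mathcal P}(n+h_{i'})$: expanding the weight reduces it to counting prime pairs in arithmetic progressions, for which no asymptotic or nontrivially small upper bound exists (termwise sieve bounds destroy the $\mu(d)\mu(e)$ cancellation, and Cauchy--Schwarz loses powers of $\log$), and it is not lower-order --- conjecturally it is of the same order as $\sum_n w_n$. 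So your own decoupling principle is violated by this step: it is exactly a quantity of the parity-barrier type. The paper never needs the weighted \emph{density} of the event $I$; it needs only the first moment $\sum_n a_nP_j(n)>(\tfrac12+\ve)\sum_n a_n$, which is Lemma~\ref{lem:2} (Proposition~2 of \cite{GPY}), and it accepts the outcome ``two primes at positions $\ne j$'' as an admissible branch of the conclusion (that branch gives the theorem with $b=1$ and the role of $j$ played by one of the prime positions, resp.\ clause (ii) of Theorems~2$'$/8$'$). Relatedly, your statement that the listed $k=C_2(\vartheta)$ are ``precisely those for which the weighted density of $I$ exceeds $\tfrac12$'' misdescribes their origin: they come from inequality \eqref{eq:4.5}, $(k-1)\vartheta\bigl(\tfrac{2}{k+1}+\tfrac{6u}{k+2}+\tfrac{6u^2(k+1)}{(k+2)(k+3)}\bigr)>1+2u+\tfrac{2u^2(k+1)}{k+2}$, optimized over the parameter $u$ of the two-term weight \eqref{eq:3.2}. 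With your single weight $\Lambda_R(n;\mathcal H)^2$ (i.e.\ $u=0$) the requirement degenerates to $\vartheta>\tfrac{k+1}{2(k-1)}$, which misses every listed pair (for $k=4$ it demands $\vartheta>5/6$, not $0.739$; for $k=3$ it demands $\vartheta>1$).

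The second problem is your almost-prime device. A weight $\rho_z(n)\ge 0$ of controlled level that \emph{vanishes} unless $P^-(n+h_i)>z$ for all $i$ is not something the Rosser--Iwaniec or Selberg lower-bound sieves provide: a lower-bound sieve satisfies $\sum_{d\mid P_{\mathcal H}(n)}\lambda_d^-\le \mathbf{1}\bigl[(P_{\mathcal H}(n),P(z))=1\bigr]$ and takes \emph{negative} values off that support, so $w_n=\Lambda_R^2\rho_z$ loses the non-negativity on which your final inequality $\sum_n w_nI(n)\frac{1-\lambda(n+h_j)}{2}>0\Rightarrow$ ``some $n$ works'' rests. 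The paper needs no second sieve at all: by Lemma~\ref{lem:5} (Lemma~4 of \cite{Pin}) the GPY weight is already concentrated on almost-prime tuples, i.e.\ the part of $\sum_n a_n$ with $(P_{\mathcal H}(n),P(R^\eta))>1$ is $O(\eta)\sum_n a_n$, so one restricts every sum to $(P_{\mathcal H}(n),P(R^\eta))=1$ at the cost of a factor $1+O(\eta)$ (formulas \eqref{eq:4.2}--\eqref{eq:4.4}) and then reruns the Section~\ref{sec:3} argument with $k-1$ prime terms. If you repair your proposal along these lines --- drop $\rho_z$, use Lemma~\ref{lem:5}, use the two-parameter weight, and replace the density-of-$I$ input by the first-moment inequality, accepting the two-prime branch --- you recover exactly the paper's proof.
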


The consequences of Theorem~\ref{th:5} (analogously to Theorems~\ref{th:1}--1'') are the following.

\begin{theorem}
\label{th:7}
There exists an integer $d$, such that $0 < |d| \leq C_3(\vartheta)$ and $\lambda(p + d) = - 1$ for infinitely many primes~$p$.
We can choose here $C_3(0.729) = 2$, $C_3(0.616) = 6$, $C_3(0.554) = 8$, $C_3(0.515) = 12$.
\end{theorem}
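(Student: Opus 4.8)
The plan is to deduce Theorem~\ref{th:7} from Theorem~\ref{th:5} exactly as the unconditional Theorem~\ref{th:1} and its variants were obtained from Theorem~\ref{th:3}, now using the conditional sizes $k = C_1(\vartheta)$. For each listed value of $\vartheta$ I would set $k = C_1(\vartheta)$ and fix an admissible $k$-tuple $\mathcal H = \{h_i\}_{i=1}^k$ of least possible diameter together with an integer $r \neq 0$, $r \notin \mathcal H$, chosen near the midpoint of $[h_1, h_k]$. Concretely one may take
\[
\{0,2\},\quad \{0,2,6\},\quad \{0,2,6,8\},\quad \{0,2,6,8,12\}
\]
for $k = 2,3,4,5$, of diameters $2,6,8,12$; these match the asserted $C_3(0.729)=2$, $C_3(0.616)=6$, $C_3(0.554)=8$, $C_3(0.515)=12$. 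Admissibility is checked as for $\mathcal H_0$: all entries are even, so the residue class modulo $2$ is never completely occupied, and a short inspection modulo $3$ and $5$ disposes of the remaining small primes.

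Next I would apply Theorem~\ref{th:5} to this $\mathcal H$ and $r$, obtaining infinitely many $n$ for which (i) or (ii) holds; by the pigeonhole principle at least one of the two alternatives then holds for infinitely many $n$. If (ii) holds infinitely often, then for infinitely many $n$ some $n + h_i$ is prime while $\lambda(n+r) = -1$, and writing $p = n + h_i$ yields $\lambda\bigl(p + (r - h_i)\bigr) = -1$. The key observation for alternative (i) is that every prime $q$ satisfies $\lambda(q) = (-1)^{\Omega(q)} = -1$: if $n + h_i$ and $n + h_j$ with $i < j$ are both prime, then with $p = n + h_i$ the number $p + (h_j - h_i) = n + h_j$ is prime, whence $\lambda\bigl(p + (h_j - h_i)\bigr) = -1$. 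Thus in either alternative one produces, for infinitely many primes $p$, a relation $\lambda(p + d) = -1$ with $d$ drawn from the finite set $\{\,r - h_i\,\} \cup \{\,h_j - h_i : i < j\,\}$.

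Because this set is finite, a final application of the pigeonhole principle pins down a single $d$ that works for infinitely many primes $p$, and it remains only to bound $|d|$. Each difference $h_j - h_i$ is positive and at most the diameter $h_k - h_1 = C_3(\vartheta)$, while the condition $r \notin \mathcal H$ guarantees $d = r - h_i \neq 0$; choosing $r$ near the midpoint of $[h_1, h_k]$ also keeps $\max_i |r - h_i| \leq h_k - h_1$ (one may take $r = 1,3,4,5$ respectively in the four cases above). Hence $0 < |d| \leq C_3(\vartheta)$, which is precisely Theorem~\ref{th:7}.

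The derivation itself is elementary bookkeeping; the one point requiring care, and the place where the constants $C_3(\vartheta)$ are genuinely determined, is the selection of the admissible $k$-tuple. The binding constraint is alternative (i), which may return the extremal pair and therefore forces $C_3(\vartheta)$ to be at least the diameter of $\mathcal H$; minimality of $C_3(\vartheta)$ thus reduces to exhibiting admissible $k$-tuples of smallest possible diameter for $k = 2,3,4,5$ and confirming that no smaller diameter is admissible, where for these small $k$ the parity obstruction modulo $2$ (which forces all $h_i$ to share a common parity) is the main constraint.
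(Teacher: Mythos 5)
Your proposal is correct and follows essentially the same route as the paper: apply Theorem~\ref{th:5} with $k = C_1(\vartheta)$ to a minimal-diameter admissible $k$-tuple and an interior $r \notin \mathcal H$, then use $\lambda(q)=-1$ for primes $q$ in alternative (i) and pigeonhole over the finite set of possible differences. Your 5-tuple $\{0,2,6,8,12\}$ differs from the paper's $\{0,4,6,10,12\}$ but is likewise admissible with diameter $12$, so the constants come out identically.
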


\begin{rema}
Apart from the first case $\vartheta = 0.729$ we can assume $r$ to be even.
\end{rema}

\noindent
{\bf Theorem 7'.}
{\it At least one of the following two assertions is true:

{\rm (i)} There exists a $|d| \leq C_4(\vartheta)$ such that $\lambda(p + d) = -1$ for infinitely many primes~$p$, where $C_4(0.729) = 1$, $C_4(0.616) = 3$, $C_4(0.554) = 4$, $C_4(0.515) = 7$.

{\rm(ii)} $\liminf\limits_{n \to \infty} (p_{n + 1} - p_n) \leq C_3(\vartheta)$, with $C_3(\vartheta)$ as in Theorem~\ref{th:7}.}

\begin{rema}
Theorem~7' remains true if $\lambda(p + d) = -1$ is replaced by $\lambda(p + d) = 1$ in (i).
\end{rema}

\noindent
{\bf Theorem 7''.}
{\it There exists a positive even $d \leq C_3(\vartheta) + 2$ such that $\lambda(p + d) = -1$ infinitely often.}

\begin{rema}
The condition $2\mid d$, $0 < d \leq C_3(\vartheta) + 2$ can be replaced here by $0 < d \leq C_3(\vartheta) + 1$.
\end{rema}

The corollaries of Theorem~\ref{th:6} (analogously to Theorems~\ref{th:2}--2') are the following.

\begin{theorem}
\label{th:8}
There exist absolute constants $C$, $c$, an \emph{odd} $b \leq C$ and an even number $d$ such that $0 < |d| \leq C_5(\vartheta)$ and $P^-(p + d) > p^c$, $\Omega(p + d) = b$ for infinitely many primes~$p$.
We can choose here $C_5(0.924) = 6$, $C_5(0.739) = 8$, $C_5(0.643) = 12$, $C_5(0.584) = 16$, $C_5(0.544) = 20$, $C_5(0.516) = 26$.
\end{theorem}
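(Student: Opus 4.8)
The plan is to obtain Theorem~\ref{th:8} as a direct corollary of Theorem~\ref{th:6}, by the same pigeonhole passage used to deduce Theorem~\ref{th:2} from Theorem~\ref{th:4} in the unconditional case. For a given admissible $\vartheta$ I would set $k = C_2(\vartheta)$ and fix an admissible $k$-tuple $\mathcal H = \{h_i\}_{i=1}^k$ \emph{all of whose entries are even} and whose diameter $h_k - h_1$ equals the value $C_5(\vartheta)$ claimed in the theorem. Taking the entries even is the device that forces the eventual shift $d$ to be even, since every difference $h_j - h_i$ is then even. I would then apply Theorem~\ref{th:6} to this $\mathcal H$ with an arbitrary but fixed index $j$.

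Theorem~\ref{th:6} furnishes infinitely many integers $n$ such that $n + h_j$ is an almost prime with $P^-(n + h_j) > n^c$ and $\Omega(n + h_j) = b$ for an absolute odd constant $b$, while $n + h_\nu \in \mathcal P$ for some $\nu = \nu(n) \neq j$. The index $\nu$ may depend on $n$, but it takes values in the finite set $\{1, \dots, k\} \setminus \{j\}$; hence by the pigeonhole principle some single value $\nu_0$ recurs for infinitely many of these $n$. Passing to that subsequence and writing $p = n + h_{\nu_0}$, $d = h_j - h_{\nu_0}$, I find that $p$ is prime and $p + d = n + h_j$, so that $\Omega(p + d) = b$ and $P^-(p + d) > n^c$. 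By construction $d$ is even, it is nonzero because $\nu_0 \neq j$, and $|d| = |h_j - h_{\nu_0}| \le h_k - h_1 = C_5(\vartheta)$. Finally $n = p - h_{\nu_0} > p/2$ once $p$ is large, so $n^c > p^{c/2}$ and the lower bound $P^-(p + d) > n^c$ sharpens to $P^-(p + d) > p^{c'}$ with $c' = c/2$, after discarding finitely many $p$. This exhibits a single even $d$ with $0 < |d| \le C_5(\vartheta)$ for which $p + d$ has the desired almost-prime structure for infinitely many primes $p$.

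It then remains only to produce, for each listed $\vartheta$, an even admissible $C_2(\vartheta)$-tuple of diameter exactly $C_5(\vartheta)$. For $k = 3, 4, 5, 6, 7, 8$ I would take respectively $\{0,2,6\}$, $\{0,2,6,8\}$, $\{0,2,6,8,12\}$, $\{0,4,6,10,12,16\}$, $\{0,2,6,8,12,18,20\}$ and $\{0,2,6,8,12,18,20,26\}$, whose diameters $6, 8, 12, 16, 20, 26$ match $C_5(0.924), \dots, C_5(0.516)$ in order. Admissibility of each is a finite verification: one checks that the entries omit a residue class modulo every prime $p \le k$, there being nothing to check for primes exceeding $k$ since $k$ residues cannot fill $p > k$ classes.

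Relative to Theorem~\ref{th:6}, where all the analytic substance lies, the argument is essentially formal. The one genuine step is the pigeonhole selection of a single $\nu_0$, and hence of a single $d$, which is unavoidable because Theorem~\ref{th:6} guarantees the prime only at \emph{some} index $\nu \neq j$ that it does not specify and that may depend on $n$. Because this $\nu$ is uncontrolled, I bound $|d| = |h_j - h_\nu|$ crudely by the diameter $h_k - h_1$ of $\mathcal H$, which is why it suffices to choose an even admissible $C_2(\vartheta)$-tuple of diameter $C_5(\vartheta)$. The remaining verifications --- admissibility of the listed tuples and the elementary transfer from $n^c$ to $p^{c'}$ --- present no real difficulty.
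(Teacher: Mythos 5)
Your proposal is correct and takes essentially the same route as the paper, which deduces Theorem~\ref{th:8} from Theorem~\ref{th:6} by choosing $r = h_j$ as an element of an all-even admissible $C_2(\vartheta)$-tuple of diameter $C_5(\vartheta)$ (the listed sets $\mathcal H_3, \dots, \mathcal H_8$), with the pigeonhole selection of a single $\nu_0$ and the passage from $n^c$ to $p^{c'}$ left implicit. The only immaterial difference is your $5$-tuple $\{0,2,6,8,12\}$ in place of the paper's $\{0,4,6,10,12\}$; both are admissible of diameter $12$.
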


\noindent
{\bf Theorem 8'.}
{\it At least one of the following assertions are true:

{\rm (i)} There exist absolute constants $C$, $c$, an \emph{odd} $b\leq C$ and an even $d$ such that $0 < |d| \leq C_6(\vartheta)$ and
$P^-(p + d) > p^c$,
$\Omega(p + d) = b$ for infinitely many primes~$p$.
We can choose here $C_6(0.924) = 4$, $C_6(0.739) = 6$, $C_6(0.643) = 6$,
$C_6(0.584) = 10$,
$C_6(0.544) = 12$, $C_6(0.516) = 14$.

{\rm (ii)} $\liminf (p_{n + 1} - p_n) \leq C_5(\vartheta)$ with the $C_5(\vartheta)$ given in Theorem~\ref{th:8}.}

\begin{rema}
We emphasize here that under the strongest assumptions on $\vartheta$ we obtained the following assertions:

A) If $\vartheta > 0.729$, then we have a $d$ with $0 < |d| \leq 2$ such that $\lambda(p + d) = -1$ for infinitely many primes~$p$.

B) If $\vartheta > 0.729$, then either the twin prime conjecture is true or $\lambda(p + d) = -1$, $|d| = 1$ holds for infinitely many primes~$p$.

C) If $\vartheta > 0.924$, then there exist absolute constants $C$, $c$, an \emph{odd} $b < C$ and a $d$ with $0 < |d| \leq 6$ such that $P^-(p + d) > p^c$, $\Omega(p + d) = b$ for infinitely many primes~$p$.

D) If $\vartheta > 0.924$, then either $\liminf\limits_{n \to \infty} (p_{n + 1} - p_n) \leq 6$ or there exists a $C$ and an \emph{odd} $b < C$ such that $\Omega(p + d) = b$ holds with $d = 2$ or $d = -4$ for infinitely many primes $p$ (or alternatively one can choose $d = 4$ or $d = -2$).
\end{rema}

In order to see that Theorems~\ref{th:5} and \ref{th:6} imply the later results we have only to note that for $2 \leq k \leq 8$ we have the following admissible sets:
\begin{align*}
\mathcal H_2 &= \{0, 2\},\\
\mathcal H_3 &= \{0, 2, 6\} \ \text{ or }\ \{0, 4, 6\},\\
\mathcal H_4 &= \{0, 2, 6, 8\},\\
\mathcal H_5 &= \{0, 4, 6, 10, 12\},\\
\mathcal H_6 &= \{0, 4, 6, 10, 12, 16\},\\
\mathcal H_7 &= \{0, 2, 6, 8, 12, 18, 20\},\\
\mathcal H_8 &= \{0, 2, 6, 8, 12, 18, 20, 26\}.
\end{align*}

\begin{rema}
One can show that for $k \leq 8$ these are the sets with minimal diameter, that is with minimal value of $h_k - h_1$ in \eqref{eq:1.3}.
\end{rema}

In order to conclude Theorem~\ref{th:7} from Theorem~\ref{th:5} we can choose any $r \notin \mathcal H_k$ with $0 < r < h_k$.
For Theorem~7' we choose $r$ so that $|r - h_k / 2|$ should be minimal, for Theorem~7'' $r = h_k + 2$ (or $h_k + 1$ for the result in the remark after Theorem~7'').
On the other hand, when deducing Theorem~\ref{th:8} from Theorem~\ref{th:6} we choose $r = h_j$ as any element of~$\mathcal H_k$.
To obtain Theorem~8' we choose it so that $|r - h_k/2|$ should be as small as possible.

Finally, if we would like to approximate the generalized twin prime problem ($p, p + 2d$ are both primes infinitely often for any integer $d > 0$), then we might consider the following two admissible sets for any $m \in \mathbb Z$:
\beq
\mathcal H'_2 = \{0, 2d\}, \quad \mathcal H''_3 = \{0, 6m, 12m\},
\label{eq:2.3}
\eeq
which yield the following corollaries to Theorems~\ref{th:5} and \ref{th:6}.

\begin{cor}
\label{cor:1}
Assume $\vartheta > 0.729$.
For any non-zero even $d$ we have either

{\rm (i)} infinitely many prime pairs $\{p, p + 2d\}$ or

{\rm (ii)} infinitely many pairs $n_1, n_2 \in \mathbb Z$ with $n_1$ being prime, $\lambda(n_2) = -1$, $|n_1 - n_2| = d$.
\end{cor}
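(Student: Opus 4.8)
The plan is to apply Theorem~\ref{th:5} with the specific admissible pair $\mathcal H = \mathcal H'_2 = \{0, 2d\}$ and the specific shift $r = d$. First I would verify that $\mathcal H'_2$ is admissible: for every odd prime a two-element set cannot cover all of the $\geq 3$ residue classes, while modulo $2$ both $0$ and $2d$ lie in the same class, so $\mathcal H'_2$ omits a residue class for every prime. Since $\vartheta > 0.729$ gives $C_1(\vartheta) = 2$, the pair $\mathcal H'_2$ has exactly the required cardinality $k = 2$. Because $d \neq 0$, the choice $r = d$ satisfies $r \neq 0$ and $r \notin \{0, 2d\}$, so all hypotheses of Theorem~\ref{th:5} are met. (We may assume $d > 0$: the format \eqref{eq:1.3} for $\{0, 2d\}$ and the requirement $|n_1 - n_2| = d$ both presuppose this, and the case $d < 0$ reduces to $|d|$ after a shift.)

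Theorem~\ref{th:5} then furnishes infinitely many integers $n$ for which either both $n$ and $n + 2d$ are prime, or at least one of $n$, $n + 2d$ is prime while $\lambda(n + d) = -1$. I would split the infinite solution set into those $n$ realizing the first alternative and those realizing the second; since their union is infinite, at least one of the two subsets is infinite, and I treat the two possibilities separately.

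If infinitely many $n$ realize the first alternative, then $\{n, n + 2d\}$ is a prime pair for infinitely many $n$, which is exactly conclusion~(i) of the Corollary. If instead infinitely many $n$ realize the second alternative, then for each such $n$ I would set $n_2 = n + d$, so that $\lambda(n_2) = -1$, and let $n_1$ be whichever of $n$, $n + 2d$ is prime. The single shift $r = d$ is chosen precisely so that $d$ is the midpoint of $0$ and $2d$: in the first subcase $|n_1 - n_2| = |n - (n + d)| = d$, and in the second $|n_1 - n_2| = |(n + 2d) - (n + d)| = d$, so in either subcase $|n_1 - n_2| = d$. Distinct values of $n$ give distinct $n_2 = n + d$, so these pairs are infinite in number, yielding conclusion~(ii).

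There is no genuine obstacle here, as the Corollary is a direct specialization of Theorem~\ref{th:5}; the only point that repays attention is the choice of the shift $r = d$. Taking it to be the midpoint of the admissible pair is what forces the distance from the $\lambda$-point $n + d$ to \emph{both} candidate prime locations to equal $d$, so that a single invocation of Theorem~\ref{th:5} simultaneously disposes of the two subcases arising in its alternative~(ii).
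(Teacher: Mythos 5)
Your proof is correct and follows exactly the paper's approach: the paper's own proof is the one-liner ``Choose $r = d$ in Theorem~\ref{th:5}'' with the admissible pair $\mathcal H'_2 = \{0, 2d\}$ from \eqref{eq:2.3}, and your argument is simply a careful expansion of that specialization (admissibility check, $C_1(0.729)=2$, and the midpoint observation that both candidate prime locations lie at distance $d$ from $n+d$).
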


\begin{proof}
Choose $r = d$ in Theorem~\ref{th:5}.
\end{proof}

\begin{cor}
\label{cor:2}
Assume $\vartheta > 0.924$.
For any $m \neq 0$ we have absolute constants $C$, $c$, an \emph{odd} $b \leq C$ such that we have either

{\rm (i)} infinitely many prime pairs $\{p, p + 12 m\}$ or

{\rm (ii)} infinitely many pairs $n_1, n_2 \in \mathbb Z$ with $n_1$ being prime, $P^-(n_2) > n^c_2$, $\Omega(n_2) = b$, $|n_1 - n_2| = 6m$.
\end{cor}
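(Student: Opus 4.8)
The plan is to deduce the corollary from Theorem~\ref{th:6} in exactly the way Corollary~\ref{cor:1} is deduced from Theorem~\ref{th:5}, the only new ingredients being the choice of the admissible triple and the choice of its \emph{central} element. Since $\vartheta > 0.924$ we may take $k = C_2(\vartheta) = 3$, and I would apply Theorem~\ref{th:6} to the set $\mathcal H''_3 = \{0, 6m, 12m\}$ appearing in \eqref{eq:2.3}. First I would record that $\mathcal H''_3$ is admissible for every $m \neq 0$: modulo $2$ and modulo $3$ all three residues collapse to $0$, while for every prime $p \geq 5$ the three elements occupy at most $3 < p$ residue classes, so no prime is fully covered. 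Replacing $m$ by $-m$ if necessary, we may assume $m > 0$, so that $6m$ is the middle element of the triple.

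Next I would take $h_j = 6m$ as the distinguished element in Theorem~\ref{th:6}. The theorem then furnishes absolute constants $C, c$ and an odd $b \leq C$ such that there are infinitely many $n$ with $P^-(n + 6m) > n^c$, $\Omega(n + 6m) = b$, and with $n + h_\nu \in \mathcal P$ for some $\nu$ indexing one of the two remaining elements $0$ or $12m$. The decisive point is the geometry of $\mathcal H''_3$: whichever of the two outer elements is prime, it lies at distance exactly $6m$ from the central element, since $|0 - 6m| = |12m - 6m| = 6m$. Hence, setting $n_2 = n + 6m$ and letting $n_1 = n + h_\nu$ be the prime neighbour, every admissible $n$ produces a pair with $n_1 \in \mathcal P$, $P^-(n_2) > n_2^c$, $\Omega(n_2) = b$ and $|n_1 - n_2| = 6m$; as there are infinitely many such $n$, this is precisely alternative~{\rm (ii)}.

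This already establishes the dichotomy, alternative~{\rm (ii)} being the one the method actually delivers; alternative~{\rm (i)}, namely infinitely many prime pairs $\{p, p + 12m\}$ in which \emph{both} outer elements are prime, is the idealized conclusion that the parity barrier prevents us from isolating, and it is retained only to mirror the shape of Corollary~\ref{cor:1}. Accordingly, the corollary carries no real difficulty beyond Theorem~\ref{th:6} itself: the entire analytic content --- the $\cite{GPY}$-type weighting tuned to $\vartheta > 0.924$ together with the level-of-distribution input \eqref{eq:2.1} and the sieve producing the odd almost-prime at $n + h_j$ --- is imported from that theorem. The only genuinely new steps are the admissibility check, valid uniformly in $m$, and the selection of the midpoint $6m$ as $h_j$. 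I would therefore expect no serious obstacle here; the sole point requiring care is that the index $j$ must be fixed \emph{before} invoking Theorem~\ref{th:6}, so that the equidistance $|h_\nu - h_j| = 6m$ holds for \emph{both} admissible values of $\nu$ --- which is exactly why the central element, rather than an endpoint, must be chosen.
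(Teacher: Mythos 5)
Your core deduction coincides with the paper's own proof, which consists of the single line ``Choose $r = h_2 = 6m$ in Theorem~\ref{th:6}'': you apply Theorem~\ref{th:6} with $k = C_2(\vartheta) = 3$ to the admissible triple $\mathcal H''_3 = \{0, 6m, 12m\}$ and distinguish the \emph{middle} element $h_j = 6m$, so that whichever outer component is prime lies at distance exactly $6m$ from $n + 6m$. The admissibility check (all elements $\equiv 0$ modulo $2$ and $3$, at most three classes modulo $p \geq 5$) and your insistence on the midpoint rather than an endpoint are exactly the right points.

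However, your interpretive claim --- that alternative (ii) is ``the one the method actually delivers'' and that alternative (i) ``is retained only to mirror the shape of Corollary~\ref{cor:1}'' --- gets the role of the dichotomy backwards. It is true that the \emph{literal} wording of Theorem~\ref{th:6} is a conjunction ($\Omega(n + h_j) = b$ odd \emph{and} a prime $n + h_\nu$, $\nu \neq j$), from which (ii) would follow outright and (i) would be vacuous. But what the proof in Section~\ref{sec:4} actually establishes is genuinely dichotomous: the weighted inequality produces infinitely many $n$, with all components free of small prime factors, satisfying $\sum_{i \neq j} \chi_{\mathcal P}(n + h_i) + \chi_\lambda(n + h_j) \geq 2$, and this splits into \emph{either} two primes among the non-distinguished components (here: $n$ and $n + 12m$ both prime, which is precisely alternative (i)) \emph{or} one such prime together with $\lambda(n + 6m) = -1$ (alternative (ii)). In the two-primes case the parity of $\Omega(n + 6m)$ is completely uncontrolled, and the method cannot decide which case occurs --- this is the parity barrier itself, and it is exactly why Corollary~\ref{cor:2} (like Theorem~2$'$ and Theorem~8$'$) is stated as a dichotomy rather than as assertion (ii) alone. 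So alternative (i) is not decorative: it absorbs the case the sieve cannot exclude. Your deduction is formally valid as a reading of Theorem~\ref{th:6}'s wording, but the assertion that (ii) by itself is established claims strictly more than the paper's method proves.
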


\begin{proof}
Choose $r = h_2 = 6m$ in Theorem~\ref{th:6}.
\end{proof}

\section{Proofs of Theorems~\ref{th:3} and \ref{th:5}}
\label{sec:3}

The idea of the proof is -- analogously to \cite{GPY} -- to weigh the natural numbers with a weight inspired by Selberg's sieve (cf.\ (2.13) of \cite{GPY})
\beq
\Lambda_R(n; \mathcal H, l) := \frac1{(k + l)!} \sum_{\substack{d \mid P_{\mathcal H}(n)\\
d \leq R}} \mu(d) \left(\log \frac{R}{d}\right)^{k + l}, \quad P_{\mathcal H}(n) = \prod^k_{i = 1} (n + h_i),
\label{eq:3.1}
\eeq
more precisely by the square of a linear combination of these weights
\beq
a_n := a_n(\mathcal H; u) := \left(\Lambda_R(n; \mathcal H, 0) + \frac{u(k + 1)}{\log R} \Lambda_R(n; \mathcal H, 1)\right)^2,
\label{eq:3.2}
\eeq
where $u$ is a real parameter to be optimized according to the concrete problem.
We will choose $N$ as a large integer tending to infinity and let $n$ run in the interval $[N, 2N)$ which we abbreviate by $n \sim N$.
We will put
\beq
\chi_{\mathcal P}(n) := \begin{cases}
1 &\text{if }\ n \in \mathcal P\\
0 &\text{if }\ n \notin \mathcal P
\end{cases}, \
\chi_\lambda(n) := \frac{1 - \lambda(n)}{2} = \begin{cases}
1 &\text{if }\ \lambda(n) = -1\\
0 &\text{if }\ \lambda(n) = 1
\end{cases},
\label{eq:3.3}
\eeq
and study the average of the function $a_n s(n)$, namely,
\beq
S(N, \mathcal H, u) := \frac1{N}\! \sum_{n \sim N} a_n s(n), \ 
s(n)\! :=\! s_{\mathcal P}(n) + \chi_\lambda(n + r), \ 
s_{\mathcal P}(n) \! :=\! \sum^k_{i = 1} \chi_{\mathcal P}(n + h_i).
\label{eq:3.4}
\eeq

We will compare this quantity with the average of the weights $a_n$, that is, with
\beq
A(N, \mathcal H, u) := \frac1{N} \sum_{n \sim N} a_n.
\label{eq:3.5}
\eeq

Our goal will be to show
\beq
S(N, \mathcal H, u) > A(N, \mathcal H, u)
\label{eq:3.6}
\eeq
which clearly implies the existence of at least one $n \sim N$ with $s(n) > 1$ and thereby the existence of either

(i) two primes of the form $n + h_i$, $n + h_j$ $(i \neq j)$ or

(ii) one prime $n + h_i$ and $\lambda(n + r) = -1$ $(r \neq h_i)$.

In the proof of \eqref{eq:3.6} we can make use of Propositions 1 and 2 of \cite{GPY}, which we quote now as Lemmas~\ref{lem:1} and \ref{lem:2} in the special case $\mathcal H_1 = \mathcal H_2$, $h_k \leq C(k) = O(1)$ as $k$ will be bounded in our case.
Constants $c, C, c_i, C_i$ will be absolute unless otherwise stated and can be different at different occurrences.
The same is true for constants implied by the $\ll$ or $O$ symbols.
The symbol $o$ refers to the case $N \to \infty$, but it might also depend on~$k$.
The letters $p$, $p_i$ will denote always primes.

The crucial singular series is defined for $\mathcal H = \mathcal H_k$ by
\beq
\mathfrak S(\mathcal H) := \prod_p \left(1 - \frac{\nu_p}{p}\right) \left(1 - \frac1p\right)^{-k} \geq c_0(k, \mathcal H),
\label{eq:3.7}
\eeq
for admissible $\mathcal H$
where $\nu_p = \nu_p(\mathcal H)$ denotes the number of residue classes occupied by $\mathcal H \mod p$.
As stated in Theorems \ref{th:3}--\ref{th:6}, we will always assume that $\mathcal H$ is admissible, that is,
\beq
\nu_p < p\ \text{ for }\ p \in \mathcal P, \ \text{ equivalently }\ \mathfrak S (\mathcal H) \neq 0.
\label{eq:3.8}
\eeq

With these notations we state (cf.\ (2.14)--(2.15) of \cite{GPY})

\begin{lemma}
\label{lem:1}
If $R \ll N^{1/2} (\log N)^{-8(k + 1)}$ then
\beq
\label{eq:3.9}
\frac1N \sum_{n \sim N} \Lambda_R(n; \mathcal H, l_1) \Lambda_R(n; \mathcal H, l_2) =
\bigl(\mathfrak S(\mathcal H) + o(1)\bigr) {l_1 + l_2\choose l_1} \frac{(\log R)^{k + l_1 + l_2}}{(k + l_1 + l_2)!}.
\eeq
\end{lemma}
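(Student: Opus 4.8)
The plan is to treat \eqref{eq:3.9} as a standard Selberg-type diagonalization, following the contour-integral evaluation of \cite{GPY}. First I would expand the product of the two weights and interchange summation, writing
\[
\frac1N\sum_{n\sim N}\Lambda_R(n;\mathcal H,l_1)\Lambda_R(n;\mathcal H,l_2)
=\frac{1}{(k+l_1)!(k+l_2)!}\sum_{d_1,d_2\le R}\mu(d_1)\mu(d_2)\Big(\log\tfrac{R}{d_1}\Big)^{k+l_1}\Big(\log\tfrac{R}{d_2}\Big)^{k+l_2}T(d_1,d_2),
\]
where $T(d_1,d_2)=\frac1N\#\{n\sim N:[d_1,d_2]\mid P_{\mathcal H}(n)\}$. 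Since $\mu$ restricts to squarefree moduli, the Chinese Remainder Theorem shows that the number of residues $n\bmod D$ with $D\mid P_{\mathcal H}(n)$ is the multiplicative function $\nu(D)=\prod_{p\mid D}\nu_p$, so $T(d_1,d_2)=\nu([d_1,d_2])/[d_1,d_2]+O(\nu([d_1,d_2])/N)$.

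The second step is to dispose of the error term. Using $\sum_{d\le R}\nu(d)\ll R(\log R)^{k-1}$ together with $\nu([d_1,d_2])\le\nu(d_1)\nu(d_2)$, the total error from the $O$-term is $\ll R^2(\log R)^{4k+l_1+l_2-2}/N$; the hypothesis $R\ll N^{1/2}(\log N)^{-8(k+1)}$ forces $R^2\ll N(\log N)^{-16(k+1)}$, so this is $o\big((\log R)^{k+l_1+l_2}\big)$, negligible against the expected main term. It then remains to evaluate the diagonal sum
\[
M=\frac{1}{(k+l_1)!(k+l_2)!}\sum_{d_1,d_2}\frac{\mu(d_1)\mu(d_2)\,\nu([d_1,d_2])}{[d_1,d_2]}\Big(\log\tfrac{R}{d_1}\Big)^{k+l_1}\Big(\log\tfrac{R}{d_2}\Big)^{k+l_2}.
\]

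For $M$ I would invoke the Cauchy/Perron identity $\frac{1}{m!}(\log\tfrac{R}{d})^{m}=\frac{1}{2\pi i}\int_{(c)}(R/d)^{s}s^{-m-1}\,ds$ with $c>0$, which for $m=k+l_i\ge1$ automatically encodes the truncation $d\le R$. Interchanging, $M$ becomes the double contour integral of $F(s_1,s_2)R^{s_1+s_2}s_1^{-k-l_1-1}s_2^{-k-l_2-1}$, where the Dirichlet series $F(s_1,s_2)=\sum_{d_1,d_2}\mu(d_1)\mu(d_2)\nu([d_1,d_2])[d_1,d_2]^{-1}d_1^{-s_1}d_2^{-s_2}$ has Euler factor $1-\nu_p p^{-1-s_1}-\nu_p p^{-1-s_2}+\nu_p p^{-1-s_1-s_2}$. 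Because $\nu_p=k$ for all large $p$, one factors
\[
F(s_1,s_2)=\frac{\zeta(1+s_1+s_2)^{k}}{\zeta(1+s_1)^{k}\zeta(1+s_2)^{k}}\,G(s_1,s_2),
\]
with $G$ holomorphic and nonvanishing near the origin and $G(0,0)=\mathfrak S(\mathcal H)$, as one checks directly from the local factors.

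The final step is to extract the leading singularity. Replacing $\zeta(1+s)^{-1}$ by $s$, $\zeta(1+s_1+s_2)^{k}$ by $(s_1+s_2)^{-k}$, and $G$ by $\mathfrak S(\mathcal H)$ reduces the main contribution to $\mathfrak S(\mathcal H)$ times the model integral of $R^{s_1+s_2}(s_1+s_2)^{-k}s_1^{-l_1-1}s_2^{-l_2-1}$; the substitution $s_j\mapsto s_j/\log R$ scales out $(\log R)^{k+l_1+l_2}$, and a residue computation produces the constant $\binom{l_1+l_2}{l_1}/(k+l_1+l_2)!$, matching \eqref{eq:3.9}. The main obstacle is justifying this last replacement: I would shift the contours into the zero-free region and use standard bounds on $\zeta$ and $\zeta'/\zeta$ near $\Re s=1$ to show that the contributions of $G(s_1,s_2)-G(0,0)$ and of the polar corrections to the $\zeta$-factors produce strictly smaller powers of $\log R$ and are absorbed into the $o(1)$. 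This analytic bookkeeping, rather than the algebra of the factorization, is the delicate part.
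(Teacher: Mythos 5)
The paper offers no proof of Lemma~\ref{lem:1} at all: it is quoted verbatim as Proposition~1 of \cite{GPY} (specialized to $\mathcal H_1 = \mathcal H_2$, $h_k = O(1)$), and your outline is precisely a reconstruction of the contour-integral proof given there --- the same expansion with $T(d_1,d_2)=\nu([d_1,d_2])/[d_1,d_2]+O(\nu([d_1,d_2])/N)$, the same Euler-product factorization with $G(0,0)=\mathfrak S(\mathcal H)$, and the same residue extraction of ${l_1+l_2\choose l_1}/(k+l_1+l_2)!$, with the error term correctly killed by $R^2\ll N(\log N)^{-16(k+1)}$. So your proposal is correct and takes essentially the same approach as the paper's cited source; the analytic bookkeeping you defer (contour shifts into the zero-free region) is exactly what \cite{GPY} carries out in full.
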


\begin{lemma}
\label{lem:2}
If $A > 0$ arbitrary, $R \ll N^{\vartheta /2} (\log N)^{-C(A, k)}$ then
for any $h \in \mathcal H$ we have
\begin{align}
\label{eq:3.10}
&\frac1N \sum_{n \sim N} \Lambda_R(n; \mathcal H, l_1) \Lambda_R(n; \mathcal H, l_2) \chi_{\mathcal P} (n + h) =\\
&=\bigl(\mathfrak S(\mathcal H) + o(1)\bigr) {l_1 + l_2 + 2\choose l_1 + 1} \frac{(\log R)^{k + l_1 + l_2 + 1}(\log N)^{-1}}{(k + l_1 + l_2 + 1)!}.
\notag
\end{align}
\end{lemma}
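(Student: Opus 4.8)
The plan is to expand both factors $\Lambda_R$ via their definition \eqref{eq:3.1}, interchange the order of summation, and reduce the inner sum over $n$ to a count of primes in arithmetic progressions. Writing $d=[d_1,d_2]$ for the least common multiple (squarefree, since $\mu(d_1)\mu(d_2)\ne 0$), the divisibility $d\mid P_{\mathcal H}(n)$ confines $n$ to residue classes modulo $d$; for each prime $p\mid d$ these are the $\nu_p$ classes $n\equiv -h_i\pmod p$. However, $\chi_{\mathcal P}(n+h)$ forces $n+h$ to be prime and hence coprime to $d$ (as $d\le R^2< N\le n+h$, any $p\mid d$ dividing the prime $n+h$ would have to equal it), which discards for each $p\mid d$ the single class $n\equiv -h\pmod p$. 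Thus exactly $\prod_{p\mid d}(\nu_p-1)$ residue classes survive, and for each surviving class $a$ the inner sum becomes $\sum_{n\sim N,\ n\equiv a\,(d)}\chi_{\mathcal P}(n+h)$.

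First I would evaluate these inner counts. By the prime number theorem for arithmetic progressions each surviving class contributes a main term $N/(\varphi(d)\log N)$, and summing over the $\prod_{p\mid d}(\nu_p-1)$ classes produces the expected main contribution
\begin{equation*}
\frac{N}{\log N}\,\frac{1}{(k+l_1)!(k+l_2)!}\sum_{d_1,d_2\le R}\frac{\mu(d_1)\mu(d_2)\prod_{p\mid[d_1,d_2]}(\nu_p-1)}{\varphi([d_1,d_2])}\left(\log\frac{R}{d_1}\right)^{k+l_1}\left(\log\frac{R}{d_2}\right)^{k+l_2}.
\end{equation*}
The individual errors are controlled on average by the Bombieri--Vinogradov theorem in the form \eqref{eq:2.1}: since $[d_1,d_2]\le d_1d_2\le R^2\ll N^{\vartheta-\ve}$ and the logarithmic weights are $\ll(\log R)^{k+l_1+l_2}$, the total error is $\ll_A N(\log N)^{-A}$ for every $A$, hence negligible against the main term. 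This is the step that dictates the hypothesis $R\ll N^{\vartheta/2}(\log N)^{-C(A,k)}$, and I expect it to be the main obstacle: everything hinges on the level of distribution $\vartheta$ keeping moduli up to $R^2$ within reach of \eqref{eq:2.1}.

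It then remains to evaluate the arithmetic sum, which I would do by the standard contour method. Using $\frac{1}{(k+l)!}(\log(R/d))^{k+l}=\frac{1}{2\pi i}\int_{(c)}(R/d)^s s^{-(k+l+1)}\,ds$ turns the double sum into
\begin{equation*}
\frac{1}{(2\pi i)^2}\int_{(c_1)}\int_{(c_2)}\frac{R^{s_1+s_2}}{s_1^{k+l_1+1}s_2^{k+l_2+1}}F(s_1,s_2)\,ds_1\,ds_2,\qquad F(s_1,s_2)=\prod_p\left(1-\frac{\nu_p-1}{p-1}\bigl(p^{-s_1}+p^{-s_2}-p^{-s_1-s_2}\bigr)\right),
\end{equation*}
the Euler product being read off from the multiplicativity of the coefficients. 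Comparing local factors then yields
\begin{equation*}
F(s_1,s_2)=\left(\frac{\zeta(1+s_1+s_2)}{\zeta(1+s_1)\,\zeta(1+s_2)}\right)^{k-1}G(s_1,s_2),
\end{equation*}
where $G$ is holomorphic and bounded near the origin with $G(0,0)=\mathfrak S(\mathcal H)$, as one checks directly against \eqref{eq:3.7} using $\nu_p=k$ for all large $p$. The exponent $k-1$, rather than $k$, is the arithmetic shadow of having fixed the one coordinate $n+h$ to be prime.

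Finally I would extract the main term. Near $s_1=s_2=0$ one has $\zeta(1+s)\sim 1/s$, so the zeta-ratio behaves like $\bigl(s_1s_2/(s_1+s_2)\bigr)^{k-1}$ and, after replacing $G$ by $G(0,0)$, the integral reduces to
\begin{equation*}
\frac{\mathfrak S(\mathcal H)+o(1)}{(2\pi i)^2}\int\!\!\int\frac{R^{s_1+s_2}}{s_1^{l_1+2}s_2^{l_2+2}(s_1+s_2)^{k-1}}\,ds_1\,ds_2.
\end{equation*}
Rescaling $s_i\mapsto s_i/\log R$ shows this is a constant times $(\log R)^{k+l_1+l_2+1}$, and evaluating the resulting absolute (Beta-type) integral pins the constant to $\binom{l_1+l_2+2}{l_1+1}/(k+l_1+l_2+1)!$. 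Since replacing $G$ by $G(0,0)$ and the prime count by its main term each cost only a factor $1+o(1)$, multiplying by the prefactor $(\log N)^{-1}$ produces exactly the right-hand side of \eqref{eq:3.10}.
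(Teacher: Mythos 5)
Your proposal is correct and follows essentially the paper's own route: the paper does not prove Lemma~\ref{lem:2} itself but quotes it as Proposition~2 of [GPY] (specialized to $\mathcal H_1=\mathcal H_2$, with $\chi_{\mathcal P}$ in place of the $\theta$-weight, whence the extra $(\log N)^{-1}$), and your sketch --- expanding the weights, reducing to primes in progressions with moduli $[d_1,d_2]\le R^2$ controlled by \eqref{eq:2.1}, counting the $\prod_{p\mid [d_1,d_2]}(\nu_p-1)$ surviving classes, and evaluating the singular sum by contour integration with the $(k-1)$-fold zeta factor and $G(0,0)=\mathfrak S(\mathcal H)$ --- is exactly the argument given there. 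The final Beta-type integral indeed yields the constant $\binom{l_1+l_2+2}{l_1+1}/(k+l_1+l_2+1)!$, so the sketch reproduces \eqref{eq:3.10} as stated.
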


These lemmas take care of the evaluation of the averages of $a_n$ and $a_n \chi_{\mathcal P}(n + h)$, so we have to deal only with the average of $a_n \lambda(n + r)$.
This is in principle similar to the case $a_n \chi_{\mathcal P}(n + h)$ but in fact it is much easier.
Although its evaluation reflects a deep property of the $\lambda$-function, it is (unlike the case of primes) a simple consequence of the analogue of the Bombieri--Vinogradov theorem, that is, of \eqref{eq:2.2}.
In case $\vartheta > 1/2$ this is an unproved condition, which we assume in Theorems~\ref{th:5}--\ref{th:8}, while for $\vartheta = 1/2$ we can state it as

\begin{lemma}
\label{lem:3}
Relation \eqref{eq:2.2} is true for $\vartheta = 1/2$.
\end{lemma}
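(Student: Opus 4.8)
The plan is to establish \eqref{eq:2.2} for $\vartheta = 1/2$ by running the classical proof of the Bombieri--Vinogradov theorem, with $\lambda$ playing the role usually taken by $\Lambda$ or $\mu$. The first step is to remove the generality of the residue class $a$. Writing $g = (a,q)$ and using that $\lambda$ is completely multiplicative, every $n \le y$ with $n \equiv a \pmod q$ satisfies $(n,q) = g$, so $n = gm$ with $m \equiv a/g \pmod{q/g}$ and $(m,q/g) = 1$, whence
\[
\sum_{\substack{n \le y\\ n \equiv a\,(q)}} \lambda(n) = \lambda(g) \sum_{\substack{m \le y/g\\ m \equiv a/g\,(q/g)\\ (m,\,q/g) = 1}} \lambda(m).
\]
Summing absolute values over $q \le Q := N^{1/2-\ve}$ and splitting each modulus as $q = g q_1$, the left side of \eqref{eq:2.2} is seen to be at most $\sum_{g \le Q} \sum_{q_1 \le Q/g} \max_{M \le N/g} \max_{(b,q_1)=1} \bigl| \sum_{m \le M,\, m \equiv b\,(q_1)} \lambda(m)\bigr|$. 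Thus it suffices to prove the analogue of \eqref{eq:2.2} for residue classes \emph{coprime} to the modulus, at every scale $X$ and moduli up to $X^{1/2-\ve'}$; since $Q/g \le (N/g)^{1/2-\ve'}$ for all $g\ge 1$ (any $\ve' < \ve$), the resulting bound $\ll (N/g)(\log N)^{-B}$ is summable over $g$ and costs only one factor of $\log N$ (take $B = A+1$).

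For the coprime case I would expand into Dirichlet characters. Setting $\psi(M,\chi) := \sum_{m \le M} \lambda(m)\chi(m)$, orthogonality gives $\sum_{m \le M,\, m \equiv b\,(q_1)} \lambda(m) = \varphi(q_1)^{-1} \sum_{\chi\,(q_1)} \bar\chi(b)\,\psi(M,\chi)$ whenever $(b,q_1)=1$, and the standard passage to primitive characters reduces everything to the mean value
\[
\sum_{q \le Q} \frac{q}{\varphi(q)} \sum_{\chi\,(q)}^{*} \max_{M \le N} |\psi(M,\chi)| \ll \frac{N}{(\log N)^{B}}.
\]
The arithmetic of $\lambda$ enters through two facts. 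First, the twisted Dirichlet series factors as $\sum_m \lambda(m)\chi(m) m^{-s} = L(2s,\chi^2)/L(s,\chi)$, so $\psi(M,\chi)$ is governed by the zeros of $L(s,\chi)$ exactly as the M\"obius sum is, the extra factor $L(2s,\chi^2)$ being holomorphic and non-vanishing for $\Re s > 1/2$ (with at most a pole at $s = 1/2$ for real $\chi$, contributing $O(\sqrt N)$). Second, on the level of coefficients $\lambda = \mu * q_{\square}$, i.e.\ $\lambda(m) = \sum_{d^2 \mid m} \mu(m/d^2)$ with $q_{\square}$ the indicator of squares; this exhibits the bilinear structure required by the large sieve.

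I would then split the modulus range at $q = (\log N)^{C}$. For $q \le (\log N)^{C}$ a Siegel--Walfisz estimate $\psi(M,\chi) \ll N \exp(-c\sqrt{\log N})$, uniform in such $\chi$, follows from Perron's formula and a contour shift into the standard zero-free region of $L(s,\chi)$, with Siegel's theorem disposing of a possible exceptional real zero (at the cost of an ineffective constant); note there is no main term, since $1/L(s,\chi)$ has no pole at $s=1$. For $q > (\log N)^{C}$ I would insert a Vaughan-- (or Heath--Brown--) type identity for $\lambda$, read off from the logarithmic derivative of $\zeta(2s)/\zeta(s)$ or obtained directly from $\lambda = \mu * q_{\square}$, into the large sieve inequality $\sum_{q \le Q} \frac{q}{\varphi(q)} \sum_{\chi}^{*} |\sum_n a_n \chi(n)|^2 \ll (Q^2 + N)\sum_n |a_n|^2$, and bound the resulting Type~I and Type~II bilinear forms. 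This last step is the main obstacle: applying the large sieve to $\sum_m \mu(m)\chi(m)$ with no further factorisation loses too much (it yields $N^{3/2-\ve}$), so the identity must be arranged so that both variables in each bilinear form have length bounded away from $1$ and from $N$. This balancing is precisely the delicate mechanism of the classical argument, and it transfers to $\lambda$ without change, because the level $\vartheta = 1/2$, i.e.\ $Q = N^{1/2-\ve}$, is exactly the range in which the large sieve prevails.
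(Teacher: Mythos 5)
Your proposal is correct and takes essentially the same route as the paper: the paper's entire proof of Lemma~\ref{lem:3} is the remark that it ``runs completely analogously to Theorem~4 of Vaughan \cite{Vau}'' (the Bombieri--Vinogradov analogue for $\mu$), and your sketch --- reduction to coprime residue classes via complete multiplicativity of $\lambda$, character decomposition with a Siegel--Walfisz bound for small moduli (no main term, since $\zeta(2s)/\zeta(s)$ has no pole at $s=1$), and a Vaughan-type identity for $\lambda$ fed into the large sieve for large moduli --- is exactly that analogous argument written out. The one compressed step, namely that the Type~I/Type~II balancing ``transfers to $\lambda$ without change,'' is where the real classical work lies, but this is precisely what the paper itself delegates to Vaughan, and your justification (the factorization $\sum \lambda(m)\chi(m)m^{-s} = L(2s,\chi^2)/L(s,\chi)$, equivalently $\lambda = \mu$ convolved with the indicator of squares) correctly identifies why it goes through.
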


Its proof runs completely analogously to Theorem~4 of Vaughan \cite{Vau} which is the analogous assertion for the M\"obius function $\mu(n)$ in place of Liouville's function $\lambda(n)$.
This implies easily the ``analogue'' of Lemma~\ref{lem:2} for the $\lambda$-function.

\begin{lemma}
\label{lem:4}
If $A > 0$ arbitrary, $R \ll N^{\vartheta/2}(\log N)^{-C(A, k)}$ then for any $r \leq N$ we have
\beq
\frac1N \sum_{n \sim N} \Lambda_R(n; \mathcal H, l_1) \Lambda_R(n; \mathcal H, l_2) \lambda(n + r) \ll_{k, l, A} (\log N)^{-A} .
\label{eq:3.11}
\eeq
\end{lemma}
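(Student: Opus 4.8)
The plan is to mirror the proof of Lemma~\ref{lem:2} (Proposition~2 of \cite{GPY}), exploiting the fact that for $\lambda$ the task is strictly easier: since \eqref{eq:2.2} controls the \emph{entire} sum of $\lambda$ along a progression (there being no main term to extract, unlike the von Mangoldt weight in Lemma~\ref{lem:2}), the whole left-hand side of \eqref{eq:3.11} will come out as a pure error term. First I would multiply out the two weights via \eqref{eq:3.1} and interchange summations, writing the left side of \eqref{eq:3.11} as
\[
\frac{1}{N}\sum_{d_1,d_2\le R}\frac{\mu(d_1)\mu(d_2)}{(k+l_1)!\,(k+l_2)!}\Bigl(\log\tfrac{R}{d_1}\Bigr)^{k+l_1}\Bigl(\log\tfrac{R}{d_2}\Bigr)^{k+l_2}\sum_{\substack{n\sim N\\ d_1,d_2\mid P_{\mathcal H}(n)}}\lambda(n+r).
\]
Since $\mu(d_i)\ne 0$ forces $d_1,d_2$ squarefree, the joint condition $d_1\mid P_{\mathcal H}(n)$, $d_2\mid P_{\mathcal H}(n)$ is equivalent, prime by prime, to $q\mid P_{\mathcal H}(n)$ with $q:=[d_1,d_2]\le R^2$; by the Chinese Remainder Theorem this pins $n$ down to $\rho(q):=\prod_{p\mid q}\nu_p$ residue classes $a\bmod q$.

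For each admissible class $a\bmod q$ I would substitute $m=n+r$, so that the inner sum becomes $\sum_{m\equiv a+r\,(q),\,N+r\le m<2N+r}\lambda(m)$, a sum of $\lambda$ along one progression modulo $q$ inside an interval of length $N$. Writing such an interval sum as a difference of two initial segments bounds it (using $r\le N$) by $2E(q)$, where $E(q):=\max_{b}\max_{y\le 3N}\bigl|\sum_{m\le y,\,m\equiv b\,(q)}\lambda(m)\bigr|$. Bounding the logarithmic factors trivially by $(\log R)^{2k+l_1+l_2}\ll(\log N)^{O(k,l)}$, counting the pairs $(d_1,d_2)$ with a prescribed value $[d_1,d_2]=q$ by $3^{\omega(q)}$ (each prime of $q$ divides $d_1$ only, $d_2$ only, or both; $\omega(q)$ being the number of distinct prime divisors), and using $\rho(q)\le k^{\omega(q)}$, I would collect by $q$ to obtain
\[
\Bigl|\frac{1}{N}\sum_{n\sim N}\Lambda_R(n;\mathcal H,l_1)\Lambda_R(n;\mathcal H,l_2)\lambda(n+r)\Bigr|\ll\frac{(\log N)^{O(k,l)}}{N}\sum_{q\le R^2}(3k)^{\omega(q)}E(q).
\]

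It remains to estimate $\sum_{q\le R^2}(3k)^{\omega(q)}E(q)$, and here lies the one genuine technical point: \eqref{eq:2.2} furnishes only the \emph{unweighted} bound $\sum_{q\le N^{\vartheta-\ve}}E(q)\ll_{\ve,B}N(\log N)^{-B}$ (after harmlessly replacing $N$ by $3N$), whereas a divisor-type weight $(3k)^{\omega(q)}$ is present. I would remove it by Cauchy--Schwarz,
\[
\sum_{q\le R^2}(3k)^{\omega(q)}E(q)\le\Bigl(\sum_{q\le R^2}(3k)^{2\omega(q)}E(q)\Bigr)^{1/2}\Bigl(\sum_{q\le R^2}E(q)\Bigr)^{1/2},
\]
applying the trivial bound $E(q)\ll N/q$ (valid since $q\le R^2<N$) to one factor. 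Using $\sum_{q\le x}c^{\omega(q)}/q\ll(\log x)^{c}$ with $c=9k^2$ for the first factor and \eqref{eq:2.2} for the second gives
\[
\sum_{q\le R^2}(3k)^{\omega(q)}E(q)\ll\bigl(N(\log N)^{9k^2}\bigr)^{1/2}\bigl(N(\log N)^{-B}\bigr)^{1/2}=N(\log N)^{(9k^2-B)/2}.
\]
Combined with the $(\log N)^{O(k,l)}/N$ prefactor, choosing $B=B(A,k,l)$ large enough yields the asserted bound $(\log N)^{-A}$. The level-of-distribution bookkeeping ($q\le R^2\le N^{\vartheta}(\log N)^{-2C}$ versus the admissible range for \eqref{eq:2.2}) I would dispatch exactly as in Lemma~\ref{lem:2}; for the unconditional case $\vartheta=1/2$ one simply invokes the stronger form valid up to $N^{1/2}(\log N)^{-B}$ supplied by Lemma~\ref{lem:3}.

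The main obstacle, then, is the single non-routine step of absorbing the divisor weight $(3k)^{\omega(q)}$---arising from the number of pairs with a given least common multiple times the number of admissible residue classes---against the unweighted estimate \eqref{eq:2.2}; the Cauchy--Schwarz splitting above resolves it at the modest cost of enlarging the exponent $B$. Everything else is routine precisely because, for $\lambda$, no main term ever has to be isolated.
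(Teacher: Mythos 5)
Your proposal is correct and follows essentially the same route as the paper: expand the two weights, group the pairs by $q=[d_1,d_2]$ with multiplicity $3^{\omega(q)}$ and at most $\nu_q\le k^{\omega(q)}$ residue classes, and then use Cauchy--Schwarz to play the trivial bound $E(q)\ll N/q$ against \eqref{eq:2.2} (Lemma~\ref{lem:3} in the unconditional case), exactly as in \eqref{eq:3.14}--\eqref{eq:3.16}. The only, immaterial, difference is the bookkeeping of the Cauchy--Schwarz split --- your factors $(3k)^{2\omega(q)}E(q)$ and $E(q)$ versus the paper's $d_{3k}(q)^2/q$ and $qE_{3N}^2(q)$ --- which leads to the identical bound $N(\log N)^{(9k^2-A)/2}$.
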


\begin{proof}
For any squarefree $m$ we have for the number $\nu_m = \nu_m(\mathcal H)$ of solution of
\beq
P_{\mathcal H}(n) \equiv 0 \pmod m
\label{eq:3.12}
\eeq
by the Chinese remainder theorem
\beq
\nu_m = \prod_{p \mid m} \nu_p \leq k^{\omega(m)} = d_k(m),
\label{eq:3.13}
\eeq
where $\omega(m)$ denotes the number of different prime divisors of~$m$, $d_k(m)$ the number of ways to write $m$ as a product of $k$ integers.
Therefore, interchanging the summation in \eqref{eq:3.11} we obtain from \eqref{eq:2.2} for the left-hand side of \eqref{eq:3.11}
\begin{align}
\label{eq:3.14}
&\frac1N \sum_{d \leq R} \sum_{e \leq R} \frac{\mu(d)\mu(e) \left(\log \frac{R}{d} \right)^{k + l_1} \left(\log \frac{R}{e}\right)^{k + l_2}} {(k + l_1)!(k + l_2)!} \sum_{\substack{n \sim N\\ [d, e] \mid P_{\mathcal H}(n)}} \lambda(n + r) \ll \\
&\ll \frac{(\log R)^{2k + 2}}{N} \sum_{q \leq R^2} \biggl(\sum_{q = [d, e]} 1 \biggr) \nu_q \, E_{3N}(q) \notag
\end{align}
where (for $q \leq M$)
\beq
E_M(q) := \max_{y \leq M} \max_a \biggl|\sum_{\substack{ n\equiv a \pmod q\\
n \leq y}} \lambda(n) \biggr| \ll \frac{M}{q}.
\label{eq:3.15}
\eeq

Now, by \eqref{eq:2.2} and \eqref{eq:3.13}, the sum over $q$ in \eqref{eq:3.14} is, similarly to (9.13) of \cite{GPY},
\begin{align}
\label{eq:3.16}
&\ll \biggl( \sum_{q \leq R^2} \frac{d_{3k}(q)^2}{q} \sum_{q \leq R^2} q_{3N} E^2_{3N} (q)\biggr)^{1/2}\ll\\
&\ll \left((\log N)^{9k^2} N \cdot \frac{N}{\log^A\!\! N} \right)^{1/2} \ll N(\log N)^{(9k^2 - A)/2}.
\notag
\end{align}
\end{proof}

Using the notation
\beq
B := B(R, \mathcal H, k) := \frac{\mathfrak S(\mathcal H)\log^k R}{k!}
\label{eq:3.17}
\eeq
we have by Lemma~\ref{lem:1}
\beq
A(N, \mathcal H, u) \sim B \left(1 + 2u + 2u^2 \cdot \frac{k + 1}{k + 2} \right).
\label{eq:3.18}
\eeq

On the other hand, Lemma~\ref{lem:2} implies if $R = N^{(\vartheta - \ve)/2}$
\begin{align}
\label{eq:3.19}
S_{\mathcal P}(N, \mathcal H, u) :&= \frac1N \sum_{n \sim N} a_n s_{\mathcal P}(n) \sim \\
&\sim \frac{B k(\vartheta - \ve)}{2} \left( \frac2{k + 1} + \frac{6u}{k + 2} + \frac{6u^2(k + 1)}{(k + 2)(k + 3)} \right).\notag
\end{align}

Finally, Lemma~\ref{lem:4} yields
\beq
S_\lambda(N, \mathcal H, u) := \frac1N \sum a_n \chi_\lambda(n + r) \sim \frac{A(N, \mathcal H, u)}{2}.
\label{eq:3.20}
\eeq

It follows from \eqref{eq:3.3}--\eqref{eq:3.4} and \eqref{eq:3.17}--\eqref{eq:3.20} that in order to show the crucial relation $S(N, \mathcal H, u) > A(N, \mathcal H, u)$ we have to find a value $u$ such that
\beq
S_{\mathcal P}(N, \mathcal H, u) > \frac{1 + \ve}{2} A(N, \mathcal H, u)
\label{eq:3.21}
\eeq
which is satisfied if
\beq
k\vartheta \left( \frac2{k + 1} + \frac{6u}{k + 2} + \frac{6 u^2 (k + 1)}{(k + 2) (k + 3)} \right) > 1 + 2u + \frac{2u^2(k + 1)}{k + 2}.
\label{eq:3.22}
\eeq

If we want to show Theorem~\ref{th:3} (in which case $\vartheta = 1/2$) with $k = 6$ we have to prove the existence of a $u$ with
\beq
\frac67 + \frac{9u}{4} + \frac{7u^2}{4} - \left(1 + 2u + \frac{7u^2}{4}\right) > 0
\label{eq:3.23}
\eeq
which is equivalent with $u > 4/7$.
This proves Theorem~\ref{th:3}.

\bigskip
In order to prove Theorem~\ref{th:5} we have to consider for $k = 2, 3, 4, 5$ a quadratic inequality for $u$ and calculate the value $\vartheta_0$ for which the discriminant of the (in general really quadratic) formula equals~$0$.
If $\vartheta_0 \geq 1$ the parameter~$k$, the size of our set $\mathcal H$ is too small.
If $\vartheta_0 < 1/2$, we have an unconditional solution.
If $1/2 \leq \vartheta_0 < 1$, Theorem~\ref{th:5} is true for $\vartheta > \vartheta_0$.

\begin{rema}
Alternatively we can calculate the maximum of the ratio of the left- and right-hand side (taken without $\vartheta$) of \eqref{eq:3.22} and choosing the optimal value of $u$ we get a lower bound for~$\vartheta$.
\end{rema}

\begin{rema}
It is easy to see from \eqref{eq:3.21}--\eqref{eq:3.22} that working with the pure $k$-dimensional sieve corresponding to $l = 0$ in the weight function \eqref{eq:3.1} (without using any other values $l$ which corresponds to taking $u = 0$ in \eqref{eq:3.2}) we are not able to prove any unconditional result, even for arbitrarily large value of~$k$.
\end{rema}

\section{Proofs of Theorems~\ref{th:4} and \ref{th:6}}
\label{sec:4}

In case of the proofs of Theorems \ref{th:4} and \ref{th:6} we will choose one specific element $h_j \in \mathcal H$ and try to produce almost primes (with $\Omega(n + h_i) \leq C$) in all components $\{n + h_i\}^k_{i = 1}$, and additionally either

(i) at least two primes $n + h_\nu$, $n + h_\mu$ with $\nu, \mu \in [1,k] \setminus \{j\}$, or

(ii) one prime $n + h_\nu$ with $\nu \in [1,k]$, $\nu \neq j$ and $\lambda(n + h_j) = -1$.

The starting point is to produce almost primes in each components, which was shown to be possible in \cite{Pin} in such a way that
 using the weights \eqref{eq:3.1}--\eqref{eq:3.2},
 the total measure of those numbers $n$ for which $\mathcal P_{\mathcal H}(n) = \prod\limits^k_{i = 1}(n + h_i)$ had a prime factor below $N^\eta$ was negligible compared with the total measure of all $n \sim N$ if $k$ and $\mathcal H_k$ were fixed, $N
\to \infty$ and $\eta \to 0$.
Denoting $P(m) = \prod\limits_{p \leq m} p$ this was formulated (cf.\ Lemma~4 of \cite{Pin}) in the following way.

\begin{lemma}
\label{lem:5}
Let $N^{C_0} < R \leq N^{1/(2 + \eta)}(\log N)^{-C}$, $\eta > 0$.
Then
\beq
\sum_{\substack{n \sim N\\
(P_{\mathcal H}(n), P(R^\eta)) > 1}} \Lambda_R(n; \mathcal H, l)^2 \ll \eta \sum_{n \sim N} \Lambda_R(n; \mathcal H, l)^2,
\label{eq:4.1}
\eeq
where the constants $C$ and the one implied by the $\ll$ symbol may depend on $k$, $l$ and~$\mathcal H$.
\end{lemma}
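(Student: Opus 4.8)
The plan is to reduce the estimate to a weighted count over small primes and then to exploit a crucial cancellation in the Dirichlet series attached to the weight. Since $\Lambda_R(n;\mathcal H,l)^2\ge 0$, a union bound over the prime divisors of $P_{\mathcal H}(n)$ gives
\[
\sum_{\substack{n\sim N\\(P_{\mathcal H}(n),P(R^\eta))>1}}\Lambda_R(n;\mathcal H,l)^2\le\sum_{p\le R^\eta}T_p,\qquad
T_p:=\sum_{\substack{n\sim N\\ p\mid P_{\mathcal H}(n)}}\Lambda_R(n;\mathcal H,l)^2,
\]
so it suffices to prove $\sum_{p\le R^\eta}T_p\ll\eta\sum_{n\sim N}\Lambda_R(n;\mathcal H,l)^2$. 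First I would expand each $T_p$ exactly as in the proof of Lemma~\ref{lem:1}: writing $\Lambda_R^2$ as a double sum over $d,e\le R$ with $d,e\mid P_{\mathcal H}(n)$ and evaluating the inner sum over $n\sim N$ by the Chinese remainder theorem. With $q=[d,e]$ the joint condition $q\mid P_{\mathcal H}(n)$, $p\mid P_{\mathcal H}(n)$ is met by $\nu_q\nu_p$ residues mod $pq$ when $p\nmid q$ and by $\nu_q$ residues mod $q$ when $p\mid q$, producing a main term proportional to $N$ together with an error $O(\nu_{pq})$ per pair $(d,e)$. Summed over $d,e\le R$ and $p\le R^\eta$ this error is $\ll R^{2+\eta}(\log R)^{C}$ by \eqref{eq:3.13}, which stays below the main term \emph{precisely} under the hypothesis $R\le N^{1/(2+\eta)}(\log N)^{-C}$; the lower bound $R>N^{C_0}$ guarantees $\log R\asymp\log N$ so that the main terms have their expected size.

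The heart of the matter is the $p$-local structure of the main term. The Dirichlet series attached to $\sum_{n\sim N}\Lambda_R^2$ is $D(s_1,s_2)=\prod_q\ell_q(s_1,s_2)$ with local factors $\ell_q=1-\nu_q q^{-1-s_1}-\nu_q q^{-1-s_2}+\nu_q q^{-1-s_1-s_2}$, and I would check that forcing $p\mid P_{\mathcal H}(n)$ replaces the single factor $\ell_p$ by
\[
m_p(s_1,s_2)=\frac{\nu_p}{p}\bigl(1-p^{-s_1}\bigr)\bigl(1-p^{-s_2}\bigr).
\]
The decisive point is that $m_p$ vanishes at $s_1=s_2=0$ to order $s_1 s_2$. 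Representing $T_p$ by the double contour integral $\tfrac{N}{(2\pi i)^2}\int\!\!\int R^{s_1+s_2}s_1^{-a-1}s_2^{-a-1}(m_p/\ell_p)\,D\,ds_1\,ds_2$ with $a=k+l$, the two extra factors $(1-p^{-s_i})\approx s_i\log p$ on the scale $|s_i|\asymp1/\log R$ lower the order of each pole by one, costing a factor $(\log R)^{-2}$, while
\[
\sum_{p\le R^\eta}\frac{\nu_p(\log p)^2}{p}\ \le\ k\sum_{p\le R^\eta}\frac{(\log p)^2}{p}\ \sim\ \frac{k}{2}\,(\eta\log R)^2
\]
restores the loss. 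This yields $\sum_{p\le R^\eta}T_p\ll\eta^2\sum_{n\sim N}\Lambda_R^2$, which is even stronger than the claimed bound.

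The main obstacle is not the cancellation itself but its uniform justification: one must show that the approximations $(1-p^{-s_i})\approx s_i\log p$ and $\ell_p\approx 1-\nu_p/p$ hold with acceptable error \emph{uniformly} for all $p\le R^\eta$ along the contours, and that shifting and truncating the contours contributes only negligible terms. This is exactly the technical core of the evaluation behind Lemma~4 of \cite{Pin}, on which the present statement is modelled, and I would follow that argument. An alternative, contour-free route diagonalizes the quadratic form in $d,e$ à la Selberg and tracks the effect of deleting the one prime $p$; deleting $p$ turns each $\log$-power into a difference of consecutive $\log$-powers, reproducing the same gain $(1-p^{-s})$, but the contour method displays the double vanishing of $m_p$ most transparently and is the cleaner bookkeeping device here.
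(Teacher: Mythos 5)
Your overall framework is sound and is essentially the one behind the result you are asked to prove (the paper itself gives no proof of Lemma~\ref{lem:5}; it simply quotes Lemma~4 of \cite{Pin}): the union bound over $p\le R^\eta$, the observation that the hypothesis $R\le N^{1/(2+\eta)}(\log N)^{-C}$ is exactly what keeps the $O(\nu_{[d,e,p]})$ error terms below the main term, and the identification of the modified local factor $m_p(s_1,s_2)=\frac{\nu_p}{p}(1-p^{-s_1})(1-p^{-s_2})$ are all correct. The fatal error is the central quantitative step: the double zero of $m_p$ at $s_1=s_2=0$ does \emph{not} cost a factor $(\log R)^{-2}$, and your conclusion $\sum_{p\le R^\eta}T_p\ll\eta^2\sum_{n\sim N}\Lambda_R^2$ is false, at least for $l=0$ --- which is precisely the case the paper needs, since $a_n$ is built from $\Lambda_R(n;\mathcal H,0)$. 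The point you are missing is the joint singularity $(s_1+s_2)^{-k}$ coming from $\zeta(1+s_1+s_2)^k$ inside $D(s_1,s_2)$: once you multiply by $s_1s_2$, the rule ``each factor $s_i$ lowers the pole order and costs $1/\log R$'' breaks down, because the resulting leading integral degenerates. Concretely, for $l=0$ one has the exact evaluation (vertical-line contours, $x,y\ge 1$)
\[
\frac{1}{(2\pi i)^2}\iint\frac{x^{s_1}y^{s_2}}{s_1 s_2 (s_1+s_2)^k}\,ds_1\,ds_2=\frac{\bigl(\min(\log x,\log y)\bigr)^k}{k!},
\]
so the alternating sum over $(x,y)\in\{R,R/p\}^2$ produced by expanding $(1-p^{-s_1})(1-p^{-s_2})$ equals $\bigl((\log R)^k-(\log (R/p))^k\bigr)/k!\approx\log p\,(\log R)^{k-1}/(k-1)!$. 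The gain is \emph{linear} in $\log p$, not quadratic: three of the four points of the second difference lie on the non-smooth ridge $x=y$ or on one side of it, so the difference collapses to a first difference. A direct sanity check: for $k=1$, $l=0$, $\mathcal H=\{0\}$ and $p\nmid m$ one has $\Lambda_R(pm)=\Lambda_R(m)-\Lambda_{R/p}(m)$, and by the Barban--Vehov/Graham theorem $\sum_{m\sim N/p}\bigl(\Lambda_R(m)-\Lambda_{R/p}(m)\bigr)^2\asymp \frac{N}{p}\log p$, whereas your accounting predicts $\frac{N}{p}\cdot\frac{(\log p)^2}{\log R}$, smaller by the factor $\log p/\log R$.

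With the correct per-prime bound $T_p\ll\frac{\nu_p\log p}{p}\,\mathfrak S(\mathcal H)\,N(\log R)^{k+2l-1}$, summation over $p\le R^\eta$ via $\sum_{p\le R^\eta}\frac{\log p}{p}\sim\eta\log R$ gives exactly the assertion $\ll\eta\sum_{n\sim N}\Lambda_R^2$ of the lemma --- and the $k=1$ example above shows this is sharp, so no correct argument can produce $\eta^2$ in the case $l=0$. Your proof is therefore repairable rather than hopeless: keep the union bound, the error-term analysis and the local-factor computation, but replace the step ``$(1-p^{-s_i})\approx s_i\log p$ lowers each pole order by one, costing $(\log R)^{-2}$'' by an honest evaluation of the resulting double integral (equivalently, of the second difference of the limiting piecewise polynomials), which yields the linear gain $\log p/\log R$. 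The uniformity issues you flag at the end are real but secondary; the gap is in the main-term asymptotics themselves, not in their technical justification.
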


As remarked in \cite{Pin} after the formulation of Lemma~\ref{lem:4}, the analogous quantity with the product of $\Lambda_R(n; \mathcal H, l_1)$ and $\Lambda_R(n; \mathcal H, l_2)$ with different values of $l_1$ and $l_2$ can be estimated by Cauchy's inequality and so we arrive at the more general relation
\beq
\frac1N \sum_{\substack{n \sim N\\
(P_{\mathcal H}(n), P(R^\eta)) > 1}} a_n(\mathcal H, u) \ll \frac{\eta}{N} \sum_{n \sim N} a_n(\mathcal H, u),
\label{eq:4.2}
\eeq
or equivalently (cf.\ \eqref{eq:3.2})
\beq
\frac1N\!\! \sum_{\substack{n \sim N\\
(P_{\mathcal H}(n), P(R^\eta)) = 1}}\!\! \!\!\! a_n(\mathcal H, u) = \left(\frac{1 \!  +\!  O(\eta)}{N}\right) \sum_{n \sim N} a_n(\mathcal H, u) =: (1 + O(\eta)) A(N, \mathcal H, u),
\label{eq:4.3}
\eeq
where the constants implied by the $\ll$ and $O$ symbols may now depend on $u$, too.
Since $s(n) \leq k \leq 9$, \eqref{eq:4.2} immediately implies
\beq
S^*(N, \mathcal H, u) = \frac1N \sum_{\substack{n \sim N\\
(P_{\mathcal H}(n), P(R^\eta)) = 1}} a_n s(n) = S(N, \mathcal H, u) + O(\eta A(N, \mathcal H, u)).
\label{eq:4.4}
\eeq
This means that apart from a factor $1 + O(\eta)$ the situation is the same as in the previous section, the only difference being that we have just $k - 1$ components with primes instead of~$k$.
This means that choosing $\eta$ as a sufficiently small constant in place of \eqref{eq:3.22} we have now to ensure the inequality
\beq
(k - 1)\vartheta \left( \frac2{k + 1} + \frac{6u}{k + 2} + \frac{6u^2(k + 1)}{(k + 2)(k + 3)} \right) > 1 + 2u + \frac{2u^2(k + 1)}{(k + 2)}.
\label{eq:4.5}
\eeq

In case of Theorem~\ref{th:4} we have $\vartheta = 1/2$ and for $k = 9$ we have to find a $u$ with
\beq
\frac45  + \frac{24u}{11} + \frac{20u^2}{11} - \left( 1 + 2u + \frac{20u^2}{11}\right) > 0,
\label{eq:4.6}
\eeq
which is equivalent with $u > 1.1$.
This proves Theorem~\ref{th:4}.

Theorem~\ref{th:6} can be shown similarly to Theorem~\ref{th:5}, as described at the end of the previous section.

\section{Concluding remarks}
\label{sec:5}

The more general formulation of Theorems~\ref{th:3}--\ref{th:4} shows that apart from the small $d$'s in Theorems~\ref{th:1}--\ref{th:2} we obtain actually many different even values of $d$ such that $\lambda(p + d) = 1$ for infinitely many primes~$p$.
In fact, the lower density $\bold d(\mathcal D_0)$ of the corresponding set $\mathcal D_0$ of such $d$'s is positive and it is easy to give an explicit lower bound for it as well.
The argument is simple and similar to the one of Section~11 in \cite{Pin}.
In general, suppose $k$ is bounded and $\mathcal H_k \subset [1, U]$, where we may assume $P := P(k) := \prod\limits_{p \leq k} p \mid U$, since $U \to \infty$.
If we choose all elements from the set
\beq
\mathcal M := \{m \leq U; \ (m, P) = 1\}, \ \text{ where }\ M := |\mathcal M| = \frac{\varphi(P)}{p} U,
\label{eq:5.1}
\eeq
then $\mathcal H_k$ will be admissible since the zero residue class is not covered by $\mathcal H$ modulo any $p \leq k$.
Taking all choices for $\mathcal H_k$ we obtain ${M\choose k}$ even values of~$d$, counted with multiplicity.
A fixed difference $d$ implies at most $M - 1$ choices for the pair $h_i, h_j$ with $h_i - h_j = d$ and afterwards we have ${M - 2\choose k - 2}$ choices for the remaining $k - 2$ elements.
This implies for the multiplicity of any~$d$ the upper bound $(M - 1) {M - 2\choose k - 2}$, hence we obtain at least
\beq
\frac{M}{k(k - 1)} = \frac{\varphi(P)}{P} \cdot \frac{U}{k(k - 1)}
\label{eq:5.2}
\eeq
suitable even $d$'s, hence with the notation (let $c = c(k)$ be a small fixed constant)
\begin{align}
\label{eq:5.3}
\mathcal D_0 &= \bigl\{ 2\mid d, \# \{p; \lambda(p + d) = -1\} = \infty\bigr\}\\
\mathcal D_1 &= \bigl\{2 \mid d, \#\{p; \lambda(p + d) = -1, \ P^-(p + d) > p^c\} = \infty\bigr\}\notag
\end{align}
in the unconditional case $k = 9$ this yields with the choice $j = k$ in Theorem~\ref{th:4}
\beq
\bold d(\mathcal D_1) \geq \frac1{315},
\label{eq:5.4}
\eeq
so that more than $0.6\%$ of the even numbers have this property.

If we consider the conditional cases too, then the case $\vartheta > 0.927$ can be treated in a more efficient way, if we take into account that for any integers $s$ and $t$ with $0 < r < 3t$ the system $\mathcal H_3 = \{0, 2s, 6t\}$ is admissible, since $\nu_2(\mathcal H) = 1$, $\nu_3(\mathcal H) \leq 2$, $\nu_p(\mathcal H) \leq 3$ for $p > 3$.
Let us choose $h_j = 6t$ in Theorem~\ref{th:6}.
Then at least one of the three numbers $2s$, $6t - 2s$, $6t \in \mathcal D_1$.
Let $U$ be a large number and let
\beq
t^* = \max\{t;\ 6t \leq U, \ 6t \notin \mathcal D_1\}.
\label{eq:5.5}
\eeq
Then clearly for every $s$ at least one of $2s$ and  $6t^* - 2s$ belongs to $\mathcal D_1$; further $6t \in \mathcal D_2$ if $t^* < t \leq U/6$.
This yields in total at least
\beq
\lfloor U/6\rfloor  - t^* + \left\lfloor \frac{6t^*}{4}\right\rfloor \geq \lfloor U/6\rfloor
\label{eq:5.6}
\eeq
elements, hence
\beq
\bold d (\mathcal D_1) \geq 1/6 \ \text{ if } \ \vartheta > 0.927,
\label{eq:5.7}
\eeq
that is, essentially at least one third of all even integers belong to~$\mathcal D_1$.

For $4 \leq k \leq 8$ choosing again $j = k$ in Theorem~\ref{th:6} we obtain from \eqref{eq:5.2}
\begin{equation}
\label{eq:5.8}
\alignedat2
\bold d(\mathcal D_1) &\geq 1/36 \ & &\text{ for } \ \vartheta > 0.739,\\
\bold d(\mathcal D_1) &\geq 1/75 \ & &\text{ for } \ \vartheta > 0.643,\\
\bold d(\mathcal D_1) &\geq 2/225 \ & &\text{ for } \ \vartheta > 0.584,\\
\bold d(\mathcal D_1) &\geq 4/735 \ & &\text{ for } \ \vartheta > 0.547,\\
\bold d(\mathcal D_1) &\geq 1/245 \ & &\text{ for } \ \vartheta > 0.516.
\endalignedat
\end{equation}

Concerning the lower density of $\mathcal D_0$ in the case of Theorem~\ref{th:3}, we start with an admissible $\mathcal H_k \subseteq [1, U]$ and with any even $r \in [1, U] \setminus \mathcal H_k$.
This system yields at least one $d \in \mathcal D_0$ with either
\[
\alignedat3
&\text{(i)} \ & d &= h_j - h_i, \quad & &1 \leq i < j \leq k \ \text{ or}\hspace*{56.35mm}\\
&\text{(ii)} \ & d &= |r - h_i|, \quad & &1 \leq i \leq k.
\endalignedat
\]
We again start with the assumption $P(k) | U$, use notation \eqref{eq:5.1}, choose $\mathcal H_k \subset \mathcal M$ and obtain
\beq
{M\choose k} \left(\left\lfloor \frac{U}{2}\right\rfloor - k \right) \sim \frac{U}{2} {M \choose k} := Y
\label{eq:5.9}
\eeq
such pairs $(\mathcal H_k, r)$.

Any given $d$ might arise in the way (i) at most $\frac{U}{2} (M - 1) {M - 2\choose k - 2}$ ways as in \eqref{eq:5.2}.
On the other hand if $d$ arises as in (ii), then we have ${M \choose k}$ choices for $\mathcal H_k$, $k$ choices for $i$ and afterwards two choices for $r$, altogether $2k{M \choose k}$ possibilities.
This gives for the multiplicity of $d$ the upper bound (for both cases (i) and (ii) together)
\beq
\frac{U}{2}(M - 1) {M - 2\choose k - 2} + 2k {M \choose k} = Y \left(\frac{k(k - 1)}{M} + \frac{4k}{U} \right) := Z.
\label{eq:5.10}
\eeq
Hence the total number of different values $d \in \mathcal D_0$ is $\geq Y / Z$, and consequently
\beq
\bold d(\mathcal D_0) \geq \frac{Y}{Z} \cdot \frac1U = \left(\frac{ZU}{Y}\right)^{-1} = \left( 4k + \frac{k(k - 1)P}{\varphi(P)}\right)^{-1}.
\label{eq:5.11}
\eeq
This yields for the unconditional case $k = 6$ the estimate
\beq
\bold d(\mathcal D_0) \geq 2/273,
\label{eq:5.12}
\eeq
which means that more than $1.4\%$ of all even numbers $d$ belong to $\mathcal D_0$, equivalently satisfy $\lambda(p + d) = -1$ for infinitely many primes~$p$.
The conditional cases $k \leq 5$ follow from \eqref{eq:5.11}:
\begin{equation}
\label{eq:5.13}
\alignedat2
\bold d(\mathcal D_0) &\geq 1/12 \ & &\text{ for } \ \vartheta > 0.729,\\
\bold d(\mathcal D_0) &\geq 1/30 \ & &\text{ for } \ \vartheta > 0.616,\\
\bold d(\mathcal D_0) &\geq 1/52 \ & &\text{ for } \ \vartheta > 0.554,\\
\bold d(\mathcal D_0) &\geq 1/95 \ & &\text{ for } \ \vartheta > 0.515.
\endalignedat
\end{equation}
This means that under the strongest condition $\vartheta > 0.729$, for example, one sixth of all even integers $d$ have the property that $\lambda(p + d) = -1$ for infinitely many primes~$p$.
On the other hand Corollary~\ref{cor:1} shows under the same assumption $\vartheta > 0.729$ that for any even $d$ either $d$ or $2d$, consequently at least half of the even integers appear as the difference of two numbers among with at least one is a prime and the other has an odd number of prime factors.

A further interesting problem is that if we already know that $d \in \mathcal D_0$ or $d \in \mathcal D_1$ then what sort of lower bound can be given for the number of primes below $N$ with the property $\lambda(p + d) = -1$.
In case of $\mathcal D_0$ (Theorem~\ref{th:3}) the proof implies only the weak lower bound
\beq
N \exp \left( -C \frac{\log N}{\log\log N} \right).
\label{eq:5.14}
\eeq
We can say much more in case of $\mathcal D_1$ (Theorem~\ref{th:4}, or in the conditional case Theorem~\ref{th:6}).
If we have, namely, for a $k \in [3, 9]$ with a given $n \sim N$ $P^-\bigl(P_{\mathcal H_k}(n)\bigr) > N^c$, then $P_{\mathcal H_k}(n)$ has $O_{k, c}(1)$ divisors, consequently
\beq
a_n(\mathcal H; u) \ll_u (\log R)^{2k} \ll (\log R)^{2k},
\label{eq:5.15}
\eeq
since $u$ was chosen bounded always.

Taking into account that we obtained in \eqref{eq:4.4} finally (cf.\ \eqref{eq:3.14}--\eqref{eq:3.23})
\beq
\frac1N \sum_{\substack{n \sim N\\
(P_{\mathcal H}(n), P(R^\eta)) = 1}} a_n \biggl( \sum^k_{\substack{i = 1\\
i \neq j}} \chi_{\mathcal P}(n + h_i) + \chi_\lambda(n + h_j) - 1 \biggr) \gg_{k,u} B = \frac{\mathfrak S(\mathcal H) \log^k R}{k!},
\label{eq:5.16}
\eeq
this implies by \eqref{eq:5.15} that we found
\beq
\gg_{k,u} \frac{\mathfrak S(\mathcal H)N}{\log^k N}
\label{eq:5.17}
\eeq
integers $n \in [N, 2N)$ with the property
\beq
\sum^{k}_{\substack{i = 1\\
i\neq j}} \chi_{\mathcal P}(n + h_i) + \chi_\lambda(n + h_j) > 1, \quad
P^-(P_{\mathcal H}(n)) > R^\eta \geq N^c
\label{eq:5.18}
\eeq
since $R \geq N^{1/5}$ was chosen.
We can take here unconditionally $k = 9$, while for $\vartheta > 1/2$ we obtain the lower estimates
\beq
\frac{\mathfrak S(\mathcal H)N}{(\log N)^{C_2(\vartheta_0)}} \ \ \text{ if }\ \vartheta > \vartheta_0,
\label{eq:5.19}
\eeq
where the values for $C_2(\vartheta_0)$ are given in the formulation of Theorem~\ref{th:6}.
We remark that we can omit the dependence on~$k$, $u$ and $\mathcal H_k$ since $k \leq 9$, $u$ is bounded and for any admissible $\mathcal H_k$ we have
\beq
\mathfrak S(\mathcal H_k) \geq \prod_{p \leq 2k} \frac1p \left(1 - \frac1p\right)^{-k} \prod_{p > 2k} \left(1 - \frac{k}{p}\right) \left( 1 - \frac1p\right)^{-k} \gg_k 1.
\label{eq:5.20}
\eeq
On the other hand Selberg's sieve (cf.\ Theorem~5.1 of \cite{HR} or Theorem~2 in \S 2.2.2 of \cite{Gre}) yields for the number of $n$'s in $[N, 2N)$ with $P^-\bigl(P_{\mathcal H_k}(n)\bigr) > N^c$ the upper bound
\beq
C(k) \frac{\mathfrak S(\mathcal H_k)N}{(\log (N^c))^k} = C'(k) \frac{\mathfrak S(\mathcal H_k)N}{(\log N)^k},
\label{eq:5.19uj}
\eeq
since $c$ can be chosen as a small constant depending on~$k$.
This shows that the lower estimate \eqref{eq:5.17} is sharp up to a constant factor and also that a positive proportion of all almost prime $k$ tuples, that is all $n \in [N, 2N)$ with
\beq
P^-(n + h_i) > N^c \ \text{ for } \ 1 \leq i \leq k
\label{eq:5.20uj}
\eeq
satisfy that we can find among the elements $n + h_i$ at least one prime and another one with an odd number of prime factors $(\lambda(n + h_j) = -1)$.

We can summarize the unconditional case (cf.\ \eqref{eq:5.4} and \eqref{eq:5.17}) as

\begin{theorem}
\label{th:9}
We have an infinite set $\mathcal D_1$ of positive even numbers $d$ with lower density $\geq 1/315$, including at least one $d \leq 30$, absolute constants $C$, $c$, $c'$ and an \emph{odd} integer $b$, such that there are at least
\beq
c' \frac{N}{\log^9 N}
\label{eq:5.21}
\eeq
primes up to $N$ $(N > N_0(d))$ with $\Omega(p + d) = b$, $P^-(p + d) > p^c$.
\end{theorem}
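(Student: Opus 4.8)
The plan is to read off Theorem~\ref{th:9} by combining the two unconditional facts already isolated in this section in the case $k = 9$: the lower density estimate \eqref{eq:5.4} for the set $\mathcal D_1$, and the quantitative lower bound \eqref{eq:5.17} on the number of admissible $n$. Thus there are three assertions to secure: that $\mathcal D_1$ is infinite with $\mathbf d(\mathcal D_1) \geq 1/315$; that $\mathcal D_1$ contains at least one $d \leq 30$; and that for each fixed $d \in \mathcal D_1$ there are at least $c' N/\log^9 N$ primes $p \leq N$ with $\Omega(p + d) = b$ and $P^-(p + d) > p^c$, for a suitable odd $b$ and absolute $c, c'$.

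For the density I would run the counting argument of \eqref{eq:5.1}--\eqref{eq:5.2} with $k = 9$, so that $P = P(9) = 2 \cdot 3 \cdot 5 \cdot 7 = 210$ and $\varphi(P) = 48$. Every $9$-element subset $\mathcal H \subset \mathcal M$ of \eqref{eq:5.1} is admissible, since the class $0$ is avoided modulo each $p \leq 9$; applying Theorem~\ref{th:4} to $\mathcal H$ with $j = 9$ and invoking the quantitative count of the third step below, one obtains an even $d \in \mathcal D_1$ realized as a difference of two elements of $\mathcal H$. Bounding as in \eqref{eq:5.2} the number of tuples that can produce a given $d$ by $(M - 1)\binom{M - 2}{k - 2}$, the number of distinct even $d \leq U$ so produced is at least $\varphi(P) U /(P\, k(k - 1)) = U/315$, whence $\mathbf d(\mathcal D_1) \geq 1/315$ and $\mathcal D_1$ is infinite. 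For the required element $d \leq 30$ I would apply the same machinery (this is just Theorem~\ref{th:2}) to the admissible tuple $\mathcal H_1 = \{0, 2, 6, 8, 12, 18, 20, 26, 30\}$, whose diameter is $30$, so that any realized difference $d$ is positive and at most $30$.

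The quantitative count is the substantive step. Fixing $d \in \mathcal D_1$ together with an admissible $9$-tuple $\mathcal H$ realizing it, I would restrict to the $n \sim N$ with $(P_{\mathcal H}(n), P(R^\eta)) = 1$, which is legitimate by Lemma~\ref{lem:5} and \eqref{eq:4.3}. On this range \eqref{eq:5.16} gives $\sum_n a_n \bigl( \sum_{i \neq j} \chi_{\mathcal P}(n + h_i) + \chi_\lambda(n + h_j) - 1 \bigr) \gg B N$, with $B \asymp \mathfrak S(\mathcal H)(\log R)^9$ by \eqref{eq:3.17}; since every such $n$ satisfies $P^-(P_{\mathcal H}(n)) > R^\eta \geq N^c$, the pointwise bound \eqref{eq:5.15} yields $a_n \ll (\log R)^{18}$, while the bracket is an integer bounded by $k - 1$. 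Discarding the nonpositive contributions and dividing, the number of $n \sim N$ obeying \eqref{eq:5.18} is $\gg \mathfrak S(\mathcal H) N/\log^9 N \gg N/\log^9 N$ by \eqref{eq:5.20}. Each such $n$ carries a prime $n + h_\nu$ $(\nu \neq j)$, and either $\lambda(n + h_j) = -1$, forcing $\Omega(n + h_j)$ odd, or a second prime $n + h_\mu$ $(\mu \neq j)$, which itself has $\Omega = 1$; in both situations one reads off a prime $p$ together with a bounded even difference $d'$ such that $\Omega(p + d') = b'$ is odd and $\leq C := \lceil 1/c \rceil$, with $P^-(p + d') > N^c$. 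A pigeonhole over the finitely many admissible pairs $(d', b')$ fixes a single $(d, b)$ attained $\gg N/\log^9 N$ times, and a dyadic summation turns the count of $n \sim N$ into the claimed count of primes $p \leq N$; here $P^-(p + d) > p^c$ because $p \asymp N$ and $R^\eta \geq N^c$.

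The main obstacle is precisely this passage from the positivity of a weighted average to an honest count. It rests on the uniform pointwise estimate $a_n \ll (\log R)^{18}$, which holds only after the $n$ divisible by a prime below $R^\eta$ have been removed via Lemma~\ref{lem:5}, and its sharpness---hence the correctness of the exponent $9$---must be certified by the matching Selberg upper bound \eqref{eq:5.19uj}. Once these two estimates are in hand, the residual bookkeeping (controlling the sign of the bracket in \eqref{eq:5.16}, pigeonholing the odd value $b$, and the dyadic summation) is routine.
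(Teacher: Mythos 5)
Your proposal is correct and follows essentially the same route as the paper, whose own proof of Theorem~\ref{th:9} is precisely the combination of the density count \eqref{eq:5.1}--\eqref{eq:5.4} (with $k=9$, $j=k$, and $\mathcal H_1$ supplying the element $d\leq 30$) with the quantitative bound \eqref{eq:5.15}--\eqref{eq:5.20} derived from \eqref{eq:5.16}; you merely make explicit the case analysis, pigeonhole and dyadic bookkeeping that the paper leaves implicit. One small remark: the Selberg upper bound \eqref{eq:5.19uj} is not actually needed to certify the exponent $9$ in the lower bound \eqref{eq:5.21} --- it only shows that this lower bound is sharp --- so your final paragraph slightly overstates its role.
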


The fact that for a positive proportion of the almost prime $k$-tuples with \eqref{eq:5.20uj} we have at least one prime among $n  +  h_i$ and another one with \hbox{$\lambda(n + h_j) = -1$}, make possible a proof of the following extension of Theorem~\ref{th:9}.

\begin{theorem}
\label{th:10}
We have absolute constants $C$, $c$, an \emph{odd} $b \leq C$ and a set $\mathcal D_1$ of even numbers with lower density $\geq 1/315$, including at least one $d \in \mathcal D_1$ with $d \leq 30$ such that the set $\mathcal P(d)$ of primes $p$ with $\Omega(p + d) = b$, $P^-(p + d) > p^c$ contains arbitrarily long arithmetic progressions.
\end{theorem}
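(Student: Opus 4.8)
The plan is to deduce Theorem~\ref{th:10} from Theorem~\ref{th:9} by feeding the set $\mathcal P(d)$ into the machinery of Green and Tao, in the same way that the Chen primes were treated by Zhou. The essential point is that although $\mathcal P(d)$ has relative density tending to zero inside the full set of primes (by \eqref{eq:5.21} its counting function is only $\gg N/\log^9 N$, whereas $\pi(N)\sim N/\log N$), it has \emph{positive} density relative to a suitable pseudorandom majorant built from the sieve weights \eqref{eq:3.1}--\eqref{eq:3.2}. Once this is established, the relative Szemer\'edi theorem (the transference principle underlying the Green--Tao theorem) produces arbitrarily long arithmetic progressions inside $\mathcal P(d)$.

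Concretely, I would fix the even $d=h_j-h_\nu$ furnished by Theorem~\ref{th:9} together with the odd value $b\le C$ and the exponent $c$, and construct on $[N,2N)$ a majorant of the shape $\nu(m)=\beta_1(m)\,\beta_2(m+d)$, where $\beta_1$ is the usual Green--Tao truncated-divisor majorant for primes at the point $m$, and $\beta_2$ is the analogous $\Lambda_R$-sieve majorant for the event $P^-(m+d)>N^c$. Both factors are normalized squared Selberg weights of the type already appearing in \eqref{eq:3.2}, so after the appropriate normalization $\nu$ dominates the indicator of $\mathcal P(d)$, and the relative density of $\mathcal P(d)$ in $\nu$ is bounded below: the lower bound \eqref{eq:5.17} for the number of admissible $n$ matches, up to a constant factor, the Selberg-sieve upper bound \eqref{eq:5.19uj} for the number of $m=n+h_\nu$ with $P^-(P_{\mathcal H}(n))>N^c$, which is exactly the statement that $\mathcal P(d)$ occupies a positive proportion of the support of $\nu$. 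Note that the awkward equality $\Omega(m+d)=b$ never has to be majorized directly; the relative Szemer\'edi theorem only requires the \emph{set} $\mathcal P(d)$ to have positive density in the pseudorandom weight, while the weight itself detects only the softer almost-prime condition.

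The technical heart, and the step I expect to be the main obstacle, is verifying that $\nu$ is pseudorandom, i.e.\ that it satisfies the linear forms condition and the correlation condition in the sense of Green and Tao. For a single sieve factor these are precisely the estimates established in their work on the primes; the new feature is that $\nu$ is a \emph{product} of two sieve weights evaluated at the shifted arguments $m$ and $m+d$, so one must control averages of products $\prod_t \beta_1(\psi_t(\mathbf x))\,\beta_2(\psi_t(\mathbf x)+d)$ over systems of affine-linear forms $\psi_t$. This is carried out by the same local Chinese Remainder Theorem computation used for \eqref{eq:3.13}, the only difference being that each prime now contributes through \emph{two} congruence conditions coming from the two shifts; the resulting local densities still assemble into an Euler product with the correct main term, and the $W$-trick (restricting $m$ to a fixed residue class modulo the product of small primes, as in \eqref{eq:5.1}) removes the arithmetic obstructions exactly as in the Chen-prime case. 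Granting the pseudorandomness of $\nu$ and the positive relative density inherited from Theorem~\ref{th:9}, the relative Szemer\'edi theorem yields arbitrarily long arithmetic progressions in $\mathcal P(d)$, which completes the proof; the lower density bound $\mathbf d(\mathcal D_1)\ge 1/315$ and the existence of some $d\le 30$ are carried over verbatim from Theorem~\ref{th:9}.
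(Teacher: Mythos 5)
Your overall strategy --- a Green--Tao transference argument with a sieve-built pseudorandom majorant, fed by the positive-proportion statement coming from Theorem~\ref{th:9} --- is exactly the route the paper intends: the proof of Theorem~\ref{th:10} is omitted there and deferred to \cite{Pin}, where precisely such an argument is carried out. The problem is that your specific majorant breaks the argument at its decisive step, the positive relative density of $\mathcal P(d)$. Your $\nu(m)=\beta_1(m)\,\beta_2(m+d)$ encodes only the two conditions ``$m$ prime'' and ``$P^-(m+d)>N^c$''. Once $\nu$ is normalized to have average $1+o(1)$ (as pseudorandomness requires), such a product of two one-dimensional sieve factors is of size $\asymp\log^2 N$ at the points of $\mathcal P(d)$, so any $f$ with $0\le f\le\nu$ supported on $\mathcal P(d)$ has average $\ll \log^2 N\cdot|\mathcal P(d)\cap[N,2N)|/N$. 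But the only lower bound available for $|\mathcal P(d)\cap[N,2N)|$ is \eqref{eq:5.21}, namely $\gg N/\log^9 N$, because the construction of $\mathcal P(d)$ forces \emph{all nine} components $n+h_i$ to be $N^c$-rough. Hence the relative density you can feed into the relative Szemer\'edi theorem is $\ll(\log N)^{-7}=o(1)$, not $\ge\delta>0$, and the transference yields nothing. Your appeal to the matching of \eqref{eq:5.17} with \eqref{eq:5.19uj} conflates two different reference sets: those estimates say that the good $n$'s form a positive proportion of the almost-prime $9$-tuples, a set of size $\asymp N/\log^9 N$, whereas the support of your pair majorant corresponds to a set of size $\asymp N/\log^2 N$; the two differ by a factor $\log^7 N$, and no choice of normalization can absorb it.

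The repair --- and what \cite{Pin} actually does --- is to build the majorant on the full tuple rather than on the pair: take $\nu(n)$ proportional to $(\log N)^{9}$ times the indicator of $\{n\sim N:\ P^-\bigl(P_{\mathcal H}(n)\bigr)>N^{c}\}$ (suitably normalized, with a $W$-trick), whose linear forms and correlation conditions follow from fundamental-lemma-type sieve asymptotics, or equivalently work with the GPY weight $a_n$ restricted to this set. Relative to \emph{this} measure the good $n$'s --- those with some $n+h_\nu\in\mathcal P$, $\nu\ne j$, and $\Omega(n+h_j)=b$, after pigeonholing among the boundedly many pairs $(\nu,b)$ to fix one class of positive density --- genuinely have density $\gg 1$, exactly by \eqref{eq:5.17} against \eqref{eq:5.19uj}, and the transference goes through; an arithmetic progression of such $n$'s then translates by $h_\nu$ into the required progression inside $\mathcal P(d)$ with $d=h_j-h_\nu$. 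A secondary point you should also not gloss over: for a truncated divisor sum $\beta_2$, the pointwise lower bound $\beta_2(m+d)\gg\log N$ on $N^c$-rough numbers is not automatic when $N^c<R$ (divisors between $N^c$ and $R$ produce cancellation), which is another reason the construction in \cite{Pin} uses the indicator of the rough-tuple set itself rather than a signed divisor sum as the carrier of the density.
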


We will omit the proof of Theorem~\ref{th:10}, since it follows the same line of arguments as \cite{Pin}, where it was proved that if the primes have a distribution level $\vartheta > 1/2$ then we have a positive $d \leq C(\vartheta)$ such that there exist arbitrarily long arithmetic progressions of primes $p$ such that $p + d$ is also a prime (in fact the one, following $p$) for all elements of the progression.

\bigskip

\noindent
{\small J\'anos {\sc Pintz}\\
R\'enyi Mathematical Institute of the Hungarian Academy
of Sciences\\
Budapest\\
Re\'altanoda u. 13--15\\
H-1053 Hungary\\
E-mail: pintz@renyi.hu}

\end{document}